\makeatletter \@addtoreset{equation}{section} \makeatother
\renewcommand\thetable{\thesection.\@arabic\c@table}
\theoremstyle{plain}
\newtheorem*{teoA}{Theorem A}
\newtheorem*{teoB}{Theorem B}
\newtheorem{theorem}{Theorem}[section]
\newtheorem{proposition}{Proposition}[section]
\newtheorem{lemma}{Lemma}[section]
\newtheorem{definition}{Definition}[section]
\newtheorem{remark}{Remark}[section]
\newtheorem{example}{Example}[section]
\newcounter{main}
\title
{The Asymptotically Additive Topological Pressure:\\ Variational Principle For Non Compact and Intersection of Irregular Sets}
\author{G. Ferreira}
\address{Giovane Ferreira, Departamento de Matemática, Universidade Federal do Maranhão\\
Av. dos Portugueses, 1966  São Luís - MA, 65065-545}
\email{ferreiras.giovane@gmail.com}
\begin{document}

\begin{abstract}
Let $(X,d,f)$ be a dynamical system, where $(X,d)$ is a compact metric space and $f:X\rightarrow X$ is a continuous map. Using the concepts of \textit{g-almost product property} and \textit{uniform separation property} introduced by Pfister and Sullivan in \cite{Pfister2007}, we give a variational principle for certain non-compact with relation the asymptotically additive topological pressure. We also study the  set of points that are irregular for an collection finite or infinite of asymptotically additive sequences and we show that carried the  full asymptotically additive topological pressure. These results are suitable for systems such as mixing shifts of finite type, $\beta$-shifts, repellers and uniformly hyperbolic diffeomorphisms.
\end{abstract}

\keywords{Asymptotically Additive Topological Pressure, Variational Principle, Sequentially Saturated, Specification Property, $\beta$-shifts, Hyperbolic Systems, Nonconformal Repellers, Cocycles  under Shift of the Finite Type.}
\footnote{Mathematics Subject Classification. 37C50, 37C45, 37B10, 37D20. }

\maketitle


\date{\today}



\section{Introduction}

In the present paper we contribute to the theory of multifractal analysis for asymptotically additive sequences of sequentially saturated maps(see Definition \ref{def sequentially saturated}), that include maps that satisfy the specification property(as a large class of uniformly hyperbolic maps). The main result is that the irregular set of maps sequentially saturated for a asymptotically additive potential sequence $\Phi=(\phi_n)_n$ of a topological dynamical system $(X,f)$ carries the full asymptotically additive topological pressure. This generalizes results of \cite{ercai,thompson2012,Thomp2009,Zhao2011} and \cite{Tian2017}. For this purpose, we give a version of the variational principle(in the context ``asymptotically additive") for certain non compacts as in \cite{Pfister2007,Pei10,Thomp2009}. The details are given below.

Let $(X,d,f)$ be a topological dynamical system(TDS), where  $f:X\rightarrow X$ is a continuous map and $X$ is a compact metric space. Let $C^0(f,X)$ denote the space of continuous functions on $X$. A sequence of continuous functions $\Phi:=(\phi_n)_n$ is called \textit{asymptotically additive} for $f$ if for each $\delta>0$, there exists a continuous function $\psi_{\delta}$ such that

\[\limsup_{n\rightarrow \infty}\frac{1}{n}\|\phi_n-S_n\psi_{\delta}\|<\delta,\]
where $\|\cdot\|$ is the supremum norm and $S_n\psi_{\delta}(x)=\sum_{i=0}^{n-1}\psi_{\delta}(f^i(x))$. We denote by $\mathfrak{C}^0(f,X)$   the space of asymptotically additive sequences endowed with the product topology. 


Let $\mathcal{M}(X)$ be the space of Borel probability measures on $X$,  $\mathcal{M}_f(X)\subset \mathcal{M}(X)$ denote the space of $f$-invariant Borel probability measures and $\mathcal{M}_f^e(X)\subseteq \mathcal{M}_f(X)$ the subset of the ergodic measures.

In \cite{Feng2010}, Feng et al defined the Lyapunov exponent of a asymptotically additive sequence $\Phi$ at $x$. For a asymptotically additive sequence $\Phi=(\phi_n)_n$  on $X$ and $x\in X$, the Lyapunov exponent of $\Phi$ at $x$ is the limit(whenever it exists)

\begin{eqnarray}\label{limit lyapunov}
\lambda_{\Phi}(x)=\lim_{n\rightarrow \infty}\frac{\phi_n(x)}{n}.
\end{eqnarray}
By Kingman's sub-additive ergodic theorem, for any $\mu \in \mathcal{M}_f^e(X)$,

\[\lambda_{\Phi}(x)=\Phi^{*}(\mu)\;\;\;\;\;\mbox{for} \;\mu-\mbox{a.e.}\, x\in X,\]
where $\Phi^{*}(\mu):=\lim_{n\rightarrow \infty}\int \frac{\phi_n(x)}{n}d\mu(x)$, that always exists, see Proposition A.1 of \cite{Feng2010}. The authors consider the distribution of the Lyapunov exponents of $\Phi$ on $\alpha$-\textit{level set}  of $\lambda_{\Phi}$. More precisely, for any $\alpha \in \mathbb{R}$ we define the set

\[E({\Phi},f,\alpha)=\{x\in X:\lambda_{\Phi}=\alpha\}.\]
They study the topological entropy $h_{f}(E(\Phi,f,\alpha))$ in the sense of Bowen\cite{Bow73}. If we consider the multifractal decomposition 

\[X=R(\Phi,f)\cup I(\Phi,f), \]
where $R(\Phi,f)=\bigcup_{\alpha \in \mathbb{R}}E(\Phi,f,\alpha)$ is the $\Phi$-regular set($x\in R(\Phi,f)$ is called $\Phi$-regular  point)   and $I(\Phi,f):=\{x \in X: \lim_{n\rightarrow \infty}\frac{\phi_n(x)}{n}\, \mbox{does not exist}\}$ is the $\Phi$-irregular set($x\in I(\Phi,f)$ is called $\Phi$-irregular point), an interest question is about the entropy of the set $I(\Phi,f)$. Here, we address this question. More precisely, we studied the set $I(f):=\bigcup_{\Phi \in \mathfrak{C}^0(X)}I(\Phi,f)$, called of irregular set. As the product $\Pi_{n=1}^{\infty}C^0(f,X)$ is a separable set, then $\mathfrak{C}^0(f,X)\subset \Pi_{n=1}^{\infty}C^0(f,X)$ it is also a separable set, i.e., there exists a enumerable and dense set $\{(\Phi_n)_n;  n \in \mathbb{N}\}$ in $\mathfrak{C}^0(f,X)$ where $ \Phi_n=(\phi^n_k)_{k\in \mathbb{N}}$ such that, by Kingman's Theorem,   $\mu\left(\bigcap_{n=1}^{\infty}R(\Phi_n,f)\right)=1$. Then, for any $\Phi=(\phi_k)_{k\in \mathbb{N}} \in \mathfrak{C}^0(f,X)$ and $\varepsilon>0$, we can choose a $l\in \mathbb{N}$ such that
$$
\|\Phi-\Phi_l\|\leq \varepsilon.
$$
Then, for any $x\in R(f):= \bigcap_{n=1}^{\infty}R(\Phi_n,f)$,
$$
\limsup_{n\rightarrow \infty}\frac{\phi_n(x)}{n}\geq \lim_{n\rightarrow \infty}\frac{\phi_n^l(x)}{n}-\varepsilon
$$
$$
\liminf_{n\rightarrow \infty}\frac{\phi_n(x)}{n}\leq \lim_{n\rightarrow \infty}\frac{\phi_n^l(x)}{n}+\varepsilon
$$
then, $\limsup_{n\rightarrow \infty}\frac{\phi_n(x)}{n}=\liminf_{n\rightarrow \infty}\frac{\phi_n(x)}{n}$. Therefore, $I(f)=X\backslash R(f)$ is a mensurable set.



Various  authors have studied the irregular set and proved, in many cases, this  has full entropy and topological pressure. Pesin and Pitskel \cite{Pesin1984}, showed that it carried the full entropy in the case of the Bernoulli shift on two symbols, Barreira and Schmeling(\cite{Barreira2000}) for case of
generic Hölder continuous function on a conformal
repeller. See more in \cite{li2013,li2014}, \cite{Thomp2010}, \cite{thompson2012}, \cite{BarLiVal} \cite{bomfim2015} and \cite{bomfim2017}, for others authors.

The studies cited above were made for irregular sets under one observable. In \cite{Tian2017}, Tian proposes a study for irregular sets under a collection of observable functions. We make a contribution to the study of irregular sets under a collection finite or infinite of asymptotically additive potentials. More precisely, let $\mathfrak{\hat{C}}^0(f,X):= \{\Phi \in \mathfrak{C}^0(f,X):I(\Phi,f)\neq \emptyset\}$. We prove that the set $I(\Phi,f)$ carries full additive asymptotically topological pressure. In fact, we consider the additive asymptotically topological pressure 
$P_f(\bigcap_{\Psi \in D}I(\Psi,f),\Phi)$, where $D$ is an subset of $\mathfrak{\hat{C}}^0(f,X)$(and even uncountable), see Theorem B below.

Let $Z\subset X$  $f$-invariant Borel set(i.e., $f^{-1}(Z)=Z$), denote by $\mathcal{E}(Z,f)=\{\mu \in \mathcal{M}_f^e(X):\mu(Z)=1\} $ and $\mathcal{M}_xf$ the space of limit measures of the sequence of measures, in weak$^*$ topology,
\[\mathcal{E}_n(x):=\frac{1}{n}\sum_{j=0}^{n-1}\delta_{f^j(x)}.\]

In \cite{Pesin1984}, Theorem A2.1, Pesin and Pitskel proved that for a $f$-invariant Borel set $Z\subset X$, if we consider the Borel $f$-invariant set  $\mathcal{Z}=\{x\in Z:\mathcal{E}(Z,f)\cap \mathcal{M}_xf \neq \emptyset\}$, we have for any continuous function $\varphi \in C(X)$

\[P_f(\mathcal{Z},\varphi)=\sup\{h_{\mu}(f)+\int_{Z}\varphi d\mu: \mu \in \mathcal{E}(Z,f)\}.\]

\begin{definition}\label{def sequentially saturated}
	A TDS $f:X\rightarrow X$ is \textit{sequentially saturated}, if for any $\Phi \in \mathfrak{C}^0(f,X) $ and any compact connected nonempty set $K\subseteq \mathcal{M}_f(X)$ we have
	
	\[P_f(G_K,\Phi)=\inf\{h_{\mu}(f)+\Phi^{*}(\mu): \mu \in K\},\]
	where $G_K=\{x \in X:\mathcal{M}_xf=K\}$.
\end{definition} 
We proved that
\begin{teoB}\label{teo B}
	Let $(X,f)$ be a dynamical system sequentially saturated and assume that $\mathfrak{\hat{C}}^0(f,X)\neq \emptyset$. Then, for any  subset $D\subseteq \mathfrak{\hat{C}}^0(f,X)$ and for each $\Phi \in \mathfrak{C}^0(f,X) $ we have
	
	\[P_f(\bigcap_{\Psi \in D}I(\Psi,f),\Phi)=P_f(\Phi).\]
	
\end{teoB}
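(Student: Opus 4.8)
The plan is to establish the two inequalities separately. The bound $P_f(\bigcap_{\Psi\in D}I(\Psi,f),\Phi)\le P_f(\Phi)$ is immediate from monotonicity of $P_f(\cdot,\Phi)$ in the set together with $P_f(X,\Phi)=P_f(\Phi)$, since $\bigcap_{\Psi\in D}I(\Psi,f)\subseteq X$. The content is therefore the reverse inequality, and for it I would, given $\varepsilon>0$, produce a \emph{single} nonempty compact connected set $K\subseteq\mathcal{M}_f(X)$ such that (i) $\inf_{\mu\in K}\{h_\mu(f)+\Phi^*(\mu)\}\ge P_f(\Phi)-\varepsilon$ and (ii) $G_K\subseteq\bigcap_{\Psi\in D}I(\Psi,f)$. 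Granting this, the sequentially saturated hypothesis applied to $K$ gives
\[
P_f\left(\bigcap_{\Psi\in D}I(\Psi,f),\Phi\right)\ge P_f(G_K,\Phi)=\inf_{\mu\in K}\{h_\mu(f)+\Phi^*(\mu)\}\ge P_f(\Phi)-\varepsilon ,
\]
and letting $\varepsilon\to0$ finishes.

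To build $K$, write $g(\mu):=h_\mu(f)+\Phi^*(\mu)$. Recall that $\Phi^*$ is affine and continuous on $\mathcal{M}_f(X)$ and that $\mu\mapsto h_\mu(f)$ is affine, so $g$ is affine; we also use that $\mu\mapsto h_\mu(f)$ is upper semicontinuous, which holds for the systems of interest here (expansive maps, hence in particular mixing shifts of finite type, $\beta$-shifts, repellers and uniformly hyperbolic diffeomorphisms), so $g$ is upper semicontinuous. By the variational principle for the asymptotically additive pressure, $\sup_{\mu}g(\mu)=P_f(\Phi)$. Fix $\varepsilon>0$, choose $\mu_0\in\mathcal{M}_f(X)$ with $g(\mu_0)>P_f(\Phi)-\tfrac{\varepsilon}{2}$, set $W:=\{\mu\in\mathcal{M}_f(X): g(\mu)>P_f(\Phi)-\varepsilon\}$ and $K:=\overline{W}$. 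Then $\mu_0\in K$, $K$ is compact, and $K$ is convex — hence connected — because $g$ is affine and $W$ is convex; and since $g$ is upper semicontinuous and $g>P_f(\Phi)-\varepsilon$ on $W$, we get $g\ge P_f(\Phi)-\varepsilon$ on $\overline{W}=K$, which is (i).

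For (ii) I need two facts about asymptotically additive sequences, both obtained by the standard $\delta$-approximation. First, for $\Psi=(\psi_n)_n\in\mathfrak{C}^0(f,X)$, any $x\in X$ and any $\nu\in\mathcal{M}_xf$, the number $\Psi^*(\nu)$ is an accumulation point of $(\psi_n(x)/n)_n$: take, for $\delta>0$, a $\psi_\delta$ with $\limsup_n\tfrac1n\|\psi_n-S_n\psi_\delta\|<\delta$, write $\tfrac1{n_k}S_{n_k}\psi_\delta(x)=\int\psi_\delta\,d\mathcal{E}_{n_k}(x)$ along a subsequence with $\mathcal{E}_{n_k}(x)\to\nu$, use $|\int\psi_\delta\,d\nu-\Psi^*(\nu)|\le\limsup_n\tfrac1n\|\psi_n-S_n\psi_\delta\|<\delta$ (valid since $\nu$ is $f$-invariant), and let $\delta\to0$. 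Second, running the same estimate when $\Psi^*$ is constant on $\mathcal{M}_f(X)$ shows $\psi_n(x)/n$ converges for every $x$, so $I(\Psi,f)=\emptyset$; contrapositively, every $\Psi\in\mathfrak{\hat{C}}^0(f,X)$ has $\Psi^*$ non-constant on $\mathcal{M}_f(X)$. Now fix $\Psi\in D\subseteq\mathfrak{\hat{C}}^0(f,X)$. Because $g(\mu_0)>P_f(\Phi)-\varepsilon$ and $t\mapsto g((1-t)\mu_0+t\nu)$ is affine, for every ergodic $\nu$ a nontrivial sub-segment of $[\mu_0,\nu]$ based at $\mu_0$ lies in $W$; hence if $\Psi^*$ were constant on $W$, then, being affine, it would be constant along these sub-segments and so (extrapolating the affine function of $t$) on every ergodic measure, and therefore, as $\mathcal{M}_f(X)$ is the closed convex hull of its ergodic measures, constant on all of $\mathcal{M}_f(X)$ — contradiction. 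Thus $\Psi^*$ is non-constant on $K\supseteq W$, giving $\mu_1,\mu_2\in K$ with $\Psi^*(\mu_1)\ne\Psi^*(\mu_2)$. For $x\in G_K$ we have $\mathcal{M}_xf=K$, so $\Psi^*(\mu_1)$ and $\Psi^*(\mu_2)$ are both accumulation points of $(\psi_n(x)/n)_n$, whence $\lim_n\psi_n(x)/n$ does not exist and $x\in I(\Psi,f)$; since $\Psi\in D$ was arbitrary, $G_K\subseteq\bigcap_{\Psi\in D}I(\Psi,f)$, which is (ii).

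I expect the main obstacle to be precisely the uniformity over the possibly uncountable family $D$: one cannot attach to each $\Psi\in D$ its own pair of separating measures, so the argument must exhibit a single connected $K$ that is simultaneously ``transverse'' to every $\Psi^{*}$, $\Psi\in D$, while keeping $\inf_K g$ within $\varepsilon$ of $P_f(\Phi)$. This is what the affinity of $g$ and of each $\Psi^{*}$, the convexity of $W$, and the Choquet-simplex structure of $\mathcal{M}_f(X)$ deliver — a continuous affine function constant on the convex set $W$ (which meets every edge of the simplex issuing from $\mu_0$) must be globally constant. The remaining work is the routine bookkeeping of the $\delta$-approximation estimates in the asymptotically additive setting, and a check (implicit in the sequentially saturated formula for $K$ with $\inf_Kg>-\infty$) that $G_K\neq\emptyset$.
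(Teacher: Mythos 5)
Your overall strategy is sound and genuinely different from the paper's: you try to produce a \emph{single} ``wide'' compact connected $K$ on which $g(\mu)=h_\mu(f)+\Phi^*(\mu)$ stays large and which, by a Choquet--simplex/affinity argument, is automatically ``transverse'' to every non-constant $\Psi^*$ with $\Psi\in D$. The paper instead uses the separability (second countability, hence Lindelöf) of $\mathfrak{C}^0(f,X)$ to reduce the possibly uncountable $D$ to a countable subfamily $\{\Upsilon_i\}$, picks one separating measure $\nu_{\Upsilon_i}$ per $i$, and builds a ``thin'' compact connected $K$ as the union of segments $[\eta_i,\eta_{i+1}]$ with $\eta_i\to\mu$; affinity of $g$ along those segments then gives the lower bound on $\inf_K g$ by hand. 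Your approach is conceptually cleaner and avoids the countable-cover step, while the paper's explicit construction avoids any regularity of the entropy map.

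However, your proof as written has a genuine gap: you take $K:=\overline{W}$ and invoke upper semicontinuity of $\mu\mapsto h_\mu(f)$ to propagate $g\ge P_f(\Phi)-\varepsilon$ from $W$ to its closure. That hypothesis is \emph{not} part of Theorem B, which assumes only that $(X,f)$ is sequentially saturated; nothing in Definition \ref{def sequentially saturated} forces u.s.c.\ entropy. (You flag this yourself, but restricting to ``systems of interest'' changes the statement of the theorem.) Without u.s.c.\ the infimum of $g$ over $\overline{W}$ can drop below $P_f(\Phi)-\varepsilon$, and the appeal to the sequentially saturated formula no longer yields the desired lower bound. The fix is easy and keeps your strategy: replace $K=\overline{W}$ by a compact convex set manifestly inside $W$, e.g.
\[
K:=\bigl\{(1-t)\mu_0+t\nu:\nu\in\mathcal{M}_f(X),\ t\in[0,t_0]\bigr\}
\]
for a sufficiently small $t_0>0$ (using that $g$ is affine and bounded, since $h_\mu(f)\ge 0$ is bounded above by $h_{\topp}(f)$ under uniform separation and $\Phi^*$ is bounded by Feng--Huang). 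This $K$ is a continuous image of $\mathcal{M}_f(X)\times[0,t_0]$, hence compact and connected; $g\ge P_f(\Phi)-\varepsilon$ on $K$ by the affine interpolation; and if $\Psi^*$ were constant on $K$ it would, by affinity, be constant on $\mathcal{M}_f(X)$, contradicting $I(\Psi,f)\neq\emptyset$ exactly as you argue. With that repair the proof is correct and arguably simpler than the paper's, since it sidesteps the Lindelöf reduction entirely.

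One smaller point: you should state that $h_{\topp}(f)<\infty$ (so $g$ is bounded above) before using the variational principle to choose $\mu_0$; under the blanket hypotheses of the paper this follows from uniform separation, but in the abstract ``sequentially saturated'' setting of Theorem B it deserves a word, e.g.\ by treating the case $P_f(\Phi)=\infty$ separately.
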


A subset $D\subset X$ is saturated if $x\in D$ and the sequences $\mathcal{E}_n(x)$ and $\mathcal{E}_n(y)$ have the same limit-point set, then $y\in D$. Of particular interest are the generic points of $\mu$, i.e. points that satisfy $\lim_{n\rightarrow \infty}\frac{1}{n}\sum_{j=0}^{n-1}\varphi(f^j(x))=\int \varphi d\mu$. We denote the saturated set of generic points of $\mu$ by $G_{\mu}$. In \cite{Bow73} Bowen proved that if $\mu$ is ergodic, then 
\begin{eqnarray}\label{variational bowen}
h_{f}(G_{\mu})=h_{\mu}(f).
\end{eqnarray}
We know that if $\mu\in \mathcal{M}_f(X)$, then $\mu$ is ergodic if and only if $\mu(G_{\mu})=1$. For non-ergodic measures, the Equation (\ref{variational bowen}) cannot hold. It is not difficult give examples such that $G_{\mu}=\emptyset$ and $h_{\mu}(f)>0$.

In \cite{Pfister2007} Pfister and Sullivan treat of the case of non-ergodic measures, introducing two conditions on the dynamics: the 
$g$\textit{-almost product property} and the \textit{uniform separation property}. They proved that

\begin{theorem}\cite{Pfister2007}\label{theo pfister} 
	If the $g$-almost product property and the uniform separation property hold, then for any compact connected non-empty set $K\subset \mathcal{M}_f(X)$
	\[\inf\{h_{\mu}(f):\mu \in K\}=h_{f}(G_K).\]
\end{theorem}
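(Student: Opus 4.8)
The plan is to prove the two inequalities separately; set $s:=\inf\{h_\mu(f):\mu\in K\}$.

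\emph{Upper bound $h_f(G_K)\le s$.} This is the softer direction. I would fix $\mu\in K$ with $h_\mu(f)$ arbitrarily close to $s$ and use that every $x\in G_K$ has $\mu\in\mathcal{M}_xf$, so its empirical measures $\mathcal{E}_n(x)$ lie in any prescribed weak$^*$-neighbourhood $B(\mu,1/N)$ for infinitely many $n$. Thus $G_K$ is covered by the Bowen balls $B_n(y,\delta)$ taken over all large $n$ and all $y$ in a maximal $(n,\delta)$-separated set $\Gamma_n$ whose points satisfy $\mathcal{E}_n(\cdot)\in B(\mu,1/N)$. The point is that $\limsup_n\frac1n\log\#\Gamma_n$ is controlled, near $\mu$ and at small scale, by $h_\mu(f)$ — a localisation estimate for which the uniform separation property supplies the required two-sided control. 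Feeding this count into the Carath\'eodory/Bowen bookkeeping for topological entropy gives $h_f(G_K)\le h_\mu(f)+o(1)$, and letting $N\to\infty$, $\delta\to0$ and then choosing $\mu$ with $h_\mu(f)\to s$ yields $h_f(G_K)\le s$.

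\emph{Lower bound $h_f(G_K)\ge s$.} This is where the construction lives and both hypotheses enter. Since $K$ is compact and connected, I would first fix a sequence $(\mu_j)_{j\ge1}$ in $K$ whose set of weak$^*$ limit points is exactly $K$ and which passes cofinally through a countable dense subset of $K$; every $\mu_j$ has $h_{\mu_j}(f)\ge s$ by definition of $s$. By the uniform separation property there are, for each $\eta,\delta>0$ and each $j$, and for all $n$ large, $(n,\delta)$-separated sets $\Gamma_j(n)\subset X$ with $\#\Gamma_j(n)\ge e^{n(s-\eta)}$ all of whose points have $\mathcal{E}_n(\cdot)$ within $\epsilon$ of $\mu_j$. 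Now choose block lengths $N_1\ll N_2\ll\cdots$ and repetition numbers increasing fast enough, and build a Cantor-like subset of $X$ by concatenating at the $k$-th stage a long string of segments drawn from $\Gamma_{j(k)}(N_k)$, where the schedule $k\mapsto j(k)$ runs through the indices $1,2,3,\dots$ so as to realise the prescribed sweep of $K$; the $g$-almost product property converts each admissible concatenation into an honest orbit that $g$-shadows it with gaps $g(N_i)$ obeying $g(n)/n\to0$. Because the gaps, the non-uniformity in the separated-set estimates and the transient tail of a partial concatenation are all negligible once $n$ lies deep inside stage $k$, every point $y$ of the resulting set $Y$ has $\mathcal{M}_yf=K$; hence $Y\subseteq G_K$.

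Finally I would equip $Y$ with the natural measure $m$ that spreads mass uniformly over the concatenations at each stage, so that a stage-$k$ cylinder of $Y$ carries mass $\approx\prod_{i\le k}\bigl(\#\Gamma_{j(i)}(N_i)\bigr)^{-1}$, and apply the mass distribution principle for the Bowen topological entropy. Here the $g$-shadowing must be arranged so that distinct concatenations land at Bowen distance at least some fixed $\delta'>0$ apart — the $(n,\delta)$-separatedness of each $\Gamma_j(n)$ survives shadowing at a slightly smaller scale — so that a Bowen ball of radius $\delta'$ about a point of $Y$ meets only boundedly many stage-$k$ cylinders; this forces $-\frac1\ell\log m\bigl(B_\ell(y,\delta')\bigr)\ge s-O(\eta)$ for all large $\ell$, hence $h_f(Y)\ge s-O(\eta)$, and then $\eta,\delta,\epsilon\to0$ finishes the proof. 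The principal obstacle is precisely this calibration: one has to pick the $N_k$ and the repetition numbers growing rapidly enough that the $g$-gaps and all error terms are swamped, \emph{and} so that the mass-distribution estimate stays in force on Bowen balls whose order falls between two consecutive stages — this is the point where connectedness of $K$ (which keeps the limit set from jumping as the schedule moves among the $\mu_j$) and the joint use of the two hypotheses are most delicate.
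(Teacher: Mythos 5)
Your outline is correct and follows essentially the same two-step strategy the paper uses; note, though, that the paper never proves this statement directly — it is cited from Pfister--Sullivan, and the paper instead proves the pressure-theoretic generalisation (Theorem~A, via Theorems~\ref{thm specification/saturated} and~\ref{theorem B2}), of which this is the special case $\Phi\equiv 0$. Two points where your account diverges from the paper's. First, for the upper bound you invoke the uniform separation property as the source of the separated-set count near $\mu$; in fact this direction needs no hypothesis on the dynamics at all. The estimate used is the unconditional inequality $\overline{s}(\mu)\le h_{\mu}(f)$ (Proposition~\ref{proposition limit entropy}), which already bounds $\limsup_n\frac1n\log N(F,n,\varepsilon)$ by $h_\mu(f)$ for small neighbourhoods $F$ of $\mu$; uniform separation supplies only a \emph{lower} bound on that cardinality and plays no role here. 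After that, $G_K\subseteq{}^{\{\mu\}}G$ for each $\mu\in K$ gives the infimum, exactly as you say. Second, for the lower bound your construction (schedule through a sequence sweeping $K$, $g$-shadowing of concatenated blocks, Cantor set $Y\subseteq G_K$) is the Pfister--Sullivan scheme the paper adopts; but where you propose to finish by building a mass distribution on $Y$ and applying the entropy/pressure distribution principle on Bowen balls, the paper instead argues directly on the Carath\'eodory sums $M(G,s,\Phi,n,\varepsilon)$ by a word-counting/prefix argument over the sets $\mathcal{W}_k=\prod_{i\le k}\Gamma_i'$, showing the sum over any admissible cover is $\ge1$. The two bookkeeping devices are equivalent; the prefix count avoids having to verify measurability and H\"older-type regularity of the measure, while the mass-distribution formulation is perhaps more transparent. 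One technical hypothesis you gloss over (but the paper makes explicit, citing a lemma of Pfister--Sullivan) is that the sweeping sequence $(\alpha_j)\subset K$ can be chosen with $d(\alpha_j,\alpha_{j+1})\to0$ in addition to having every tail dense in $K$; this is exactly where connectedness of $K$ enters, and without it the limit set $\mathcal{M}_yf$ of a constructed point $y$ could fail to equal all of $K$.
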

In \cite{Pei10}, Pei and Chen generalized Theorem \ref{theo pfister} for the case of the topological pressure.

\begin{theorem}\label{theo pei}\cite{Pei10}
	If the $g$-almost product property and the uniform separation property hold, then for any compact connected non-empty set $K\subset \mathcal{M}_f(X)$
	\[\inf\{h_{\mu}(f)+\int \varphi d\mu:\mu \in K\}=P_f(\varphi,G_K).\]

\end{theorem}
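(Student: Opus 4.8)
The plan is to follow the scheme of Pfister and Sullivan's proof of Theorem~\ref{theo pfister}, carrying the Birkhoff sums of $\varphi$ through every estimate: whenever the empirical measures $\mathcal{E}_{n_k}(x)$ of a point $x$ converge to a measure $\mu$, the averages $\frac1{n_k}S_{n_k}\varphi(x)$ converge to $\int\varphi\,d\mu$, so in the entropy argument each occurrence of $h_\mu(f)$ is simply replaced by $h_\mu(f)+\int\varphi\,d\mu$. I would prove the two inequalities separately.

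For $P_f(\varphi,G_K)\le\inf\{h_\mu(f)+\int\varphi\,d\mu:\mu\in K\}$, I would fix $\mu\in K$. Since $\mathcal{M}_xf=K\ni\mu$ for every $x\in G_K$, one has $G_K\subseteq A_\mu:=\{x:\mu\in\mathcal{M}_xf\}$, so by monotonicity of the pressure it suffices to show $P_f(\varphi,A_\mu)\le h_\mu(f)+\int\varphi\,d\mu$ and take the infimum over $\mu$. To bound $P_f(\varphi,A_\mu)$ I would observe that for each $N$ and each small $\delta,\varepsilon>0$ one has $A_\mu\subseteq\bigcup_{n\ge N}\{x:\mathcal{E}_n(x)\in B(\mu,\delta)\}$, and each set $\{x:\mathcal{E}_n(x)\in B(\mu,\delta)\}$ is coverable by at most $\exp\!\big(n(\sup\{h_\nu(f)+\int\varphi\,d\nu:\nu\in\overline{B(\mu,\delta)}\cap\mathcal{M}_f(X)\}+\gamma)\big)$ Bowen $(n,\varepsilon)$-balls, $\gamma=\gamma(\varepsilon,\delta)\to0$, by the covering/separated-set estimate underlying the variational principle for pressure. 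Summing the Pesin--Pitskel weights $e^{-ns+S_n\varphi}$ over this cover with $s$ slightly above that exponent gives a series that is small for $N$ large, whence $P_f(\varphi,A_\mu)\le\sup\{h_\nu(f)+\int\varphi\,d\nu:\nu\in\overline{B(\mu,\delta)}\}$; letting $\delta,\varepsilon\to0$ and using continuity of $\nu\mapsto\int\varphi\,d\nu$ together with upper semicontinuity of $\nu\mapsto h_\nu(f)$ (which holds under the present hypotheses, and in particular for the expansive/$h$-expansive examples in the abstract) finishes this half.

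For $P_f(\varphi,G_K)\ge\inf\{h_\mu(f)+\int\varphi\,d\mu:\mu\in K\}=:s_0$, I would fix $\gamma>0$ and first use connectedness of $K$ to pick a sequence $(\mu_k)$ in $K$, dense in $K$ and with consecutive terms as close as desired, so that any orbit whose $k$-th stage keeps its empirical measure $\eta_k$-close to $\mu_k$ (with $\eta_k\downarrow0$) satisfies $\mathcal{M}_xf=K$. At stage $k$ the uniform separation property supplies an $(n_k,\varepsilon^*)$-separated set $\Gamma_k$ of points that are $(\mu_k,\eta_k)$-generic for the empirical measure (hence, $\varphi$ being continuous, also for the average of $\varphi$), with $\#\Gamma_k\ge\exp\!\big(n_k(h_{\mu_k}(f)+\int\varphi\,d\mu_k-\eta_k)\big)\ge\exp\!\big(n_k(s_0-\eta_k)\big)$ and $\varepsilon^*$ independent of $k$. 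Choosing $n_k\to\infty$ fast, the $g$-almost product property lets me concatenate one orbit from each $\Gamma_k$ into a genuine orbit with sublinear gluing gaps; the totality of such concatenations forms a Cantor set $F\subseteq G_K$ on which distinct choices remain $(\cdot,\varepsilon^*/2)$-separated. Putting on $F$ the measure $m$ that spreads mass uniformly over the stage-$k$ branches, I would verify the Bowen-ball estimate $m(B_n(x,\varepsilon^*/2))\le\exp\!\big(-ns_0+S_n\varphi(x)+o(n)\big)$ uniformly in $x\in F$ and conclude $P_f(\varphi,F)\ge s_0$ by the entropy distribution principle; since $\gamma$ was arbitrary, the reverse inequality follows.

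The hard part will be the lower bound, and specifically the calibration of the stage lengths $n_k$, the scales $\eta_k$, and the gluing gaps so that, on one hand, the empirical measures of the concatenated orbits accumulate on \emph{exactly} $K$ (here the sublinearity of the mistake function in the $g$-almost product property is precisely what keeps the gaps negligible at every scale, so the accumulation set is neither smaller nor larger than $K$), and, on the other hand, the almost-product gluing preserves the separation inherited from the $\Gamma_k$'s and the $m$-mass estimate survives the concatenation — this is where the \emph{uniformity} in the uniform separation property is indispensable, since it furnishes a single separation scale $\varepsilon^*$ valid for all $\mu\in K$ simultaneously. By contrast, the potential $\varphi$ causes no real difficulty: along generic orbits $\frac1nS_n\varphi$ converges to $\int\varphi\,d\mu$, so the Birkhoff sums ride along with the entropy terms and turn every ``$h_\mu(f)$'' in the $\varphi\equiv0$ argument into ``$h_\mu(f)+\int\varphi\,d\mu$'' with no extra work.
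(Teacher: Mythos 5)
Your proof scheme matches the paper's (Theorem~\ref{thm specification/saturated} for the upper bound, Theorem~\ref{theorem B2} for the lower bound, specialized to the additive case $\Phi=(S_n\varphi)_n$). In particular the lower-bound construction you describe — a dense path $(\mu_k)$ in $K$ with slowly decaying gaps, uniform separation producing $(\delta^*,n_k,\varepsilon^*)$-separated sets $\Gamma_k$ of $\mu_k$-generic points, gluing via the $g$-almost product property, and a pressure distribution principle on the resulting Cantor set — is exactly the paper's argument; your measure-on-$F$ estimate is the measure-theoretic reformulation of the paper's combinatorial prefix-counting in Lemma~\ref{lemma key}, and the two are equivalent.

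The gap is in the upper bound. You control the cover of $\{x:\mathcal{E}_n(x)\in B(\mu,\delta)\}$ by $\exp\bigl(n(\sup\{h_\nu(f)+\int\varphi\,d\nu:\nu\in\overline{B(\mu,\delta)}\cap\mathcal{M}_f(X)\}+\gamma)\bigr)$ and then let $\delta\to0$, which requires upper semicontinuity of $\nu\mapsto h_\nu(f)$. That is not among the hypotheses and does not obviously follow from them: uniform separation plus entropy density give $h_\mu(f)=s(\mu)=\lim_{\varepsilon\to0}\overline{s}(\mu,\varepsilon)$, i.e.\ a supremum over $\varepsilon$ of the upper semicontinuous functions $\overline{s}(\cdot,\varepsilon)$, and a supremum of u.s.c.\ functions need not be u.s.c.; your parenthetical ``which holds under the present hypotheses'' is an assertion, not a proof. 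The paper avoids the issue entirely via Proposition~\ref{proposition limit entropy} (Pfister--Sullivan's $\overline{s}(\mu)\le h_\mu(f)$), which says directly that for every $\mu\in\mathcal{M}_f(X)$, every $\delta>0$ and every sufficiently small $\varepsilon$ there is a neighborhood $B(\mu,\zeta_\varepsilon)$ with $N(B(\mu,\zeta_\varepsilon),n,\varepsilon)\le e^{n(h_\mu(f)+\delta)}$ for all large $n$ — the exponent is $h_\mu(f)$ itself, not a supremum over a ball, so no semicontinuity is invoked. Replace your covering bound by this one and the upper-bound argument closes up exactly as in Theorem~\ref{thm specification/saturated}.
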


 We prove a variational principle for the  asymptotically additive topological pressure on certain non-compact sets. We prove that

\begin{teoA}\label{teoA}
	If $f$ satisfies the $g$-almost product property and the uniform separation property, then $f:X\rightarrow X$ is sequentially saturated. 
	
\end{teoA}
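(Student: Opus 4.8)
The plan is to deduce Theorem~A from the Pei--Chen variational principle (Theorem~\ref{theo pei}) by passing from an asymptotically additive sequence to a genuine Birkhoff potential, using nothing about $\Phi$ beyond the defining property of $\mathfrak{C}^0(f,X)$. Fix $\Phi=(\phi_n)_n\in\mathfrak{C}^0(f,X)$, a nonempty compact connected $K\subseteq\mathcal{M}_f(X)$, and $\delta>0$. Pick $\psi_\delta\in C^0(f,X)$ with $\limsup_{n\to\infty}\frac1n\|\phi_n-S_n\psi_\delta\|<\delta$, so that $\|\phi_n-S_n\psi_\delta\|\le\delta n$ for all $n$ large. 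I would then establish two approximation estimates — one on the space of invariant measures, one for the Carath\'eodory--Pesin pressure — and let $\delta\to0$.

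On the measure side: for every $\mu\in\mathcal{M}_f(X)$, $f$-invariance gives $\frac1n\int S_n\psi_\delta\,d\mu=\int\psi_\delta\,d\mu$, whence
\[
\Bigl|\Phi^{*}(\mu)-\int\psi_\delta\,d\mu\Bigr|=\lim_{n\to\infty}\Bigl|\tfrac1n\!\int\phi_n\,d\mu-\tfrac1n\!\int S_n\psi_\delta\,d\mu\Bigr|\le\limsup_{n\to\infty}\tfrac1n\|\phi_n-S_n\psi_\delta\|<\delta .
\]
Since this bound is uniform in $\mu$, taking the infimum over $K$ gives $\bigl|\inf_{\mu\in K}\{h_\mu(f)+\Phi^{*}(\mu)\}-\inf_{\mu\in K}\{h_\mu(f)+\int\psi_\delta\,d\mu\}\bigr|\le\delta$. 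On the topological side, working from the definition of the asymptotically additive topological pressure $P_f(Z,\Phi)$ (the Carath\'eodory structure built from the weights attached to $\phi_n$ on dynamical balls $B_n(x,\varepsilon)$), the elementary comparison $\bigl|\sup_{B_n(x,\varepsilon)}\phi_n-\sup_{B_n(x,\varepsilon)}S_n\psi_\delta\bigr|\le\delta n$ for $n$ large propagates through the construction to yield $|P_f(Z,\Phi)-P_f(Z,\psi_\delta)|\le\delta$ for \emph{every} $Z\subseteq X$ — the asymptotically additive analogue of $|P_f(Z,\varphi)-P_f(Z,\varphi')|\le\|\varphi-\varphi'\|$. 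Applying this with $Z=G_K$ and invoking Theorem~\ref{theo pei} for the genuine potential $\psi_\delta$,
\[
\bigl|P_f(G_K,\Phi)-\inf_{\mu\in K}\{h_\mu(f)+\textstyle\int\psi_\delta\,d\mu\}\bigr|=\bigl|P_f(G_K,\Phi)-P_f(\psi_\delta,G_K)\bigr|\le\delta .
\]
Combining the two displays by the triangle inequality gives $|P_f(G_K,\Phi)-\inf_{\mu\in K}\{h_\mu(f)+\Phi^{*}(\mu)\}|\le2\delta$, and letting $\delta\to0$ yields $P_f(G_K,\Phi)=\inf\{h_\mu(f)+\Phi^{*}(\mu):\mu\in K\}$, i.e. that $f$ is sequentially saturated.

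The main obstacle is the second approximation estimate: one must check that the \emph{eventual} bound $\|\phi_n-S_n\psi_\delta\|\le\delta n$ (holding only for $n\ge N$) passes cleanly through the nested $\inf$--$\liminf$--$\sup$ of the Carath\'eodory definition of $P_f(Z,\Phi)$, uniformly over all $Z$, and, in particular, that $P_f(G_K,\Phi)$ is even well-posed for the (typically dense, non-compact) set $G_K$. This is routine once the definitions are unwound, since the Carath\'eodory sums are controlled by arbitrarily large $n$ and finitely many small $n$ are irrelevant, but it is the step that deserves care; the rest is a soft assembly of Theorem~\ref{theo pei} with the defining property of asymptotically additive sequences. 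A minor point to record along the way is that $\Phi^{*}$ extends from $\mathcal{M}_f^e(X)$ to all of $\mathcal{M}_f(X)$ (as noted after \eqref{limit lyapunov}), so that the right-hand side of the sequentially saturated identity is meaningful.
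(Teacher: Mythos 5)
Your argument is correct and coincides essentially verbatim with the simplified proof that the paper itself records in the remark immediately following the statement of Theorem~A (attributed there to an anonymous referee): approximate $\Phi$ by genuine Birkhoff potentials $S_n\psi_\delta$, observe that both $\Phi^*(\mu)$ and the Carath\'eodory pressure $P_f(Z,\cdot)$ are within $\delta$ of their $\psi_\delta$-counterparts uniformly in $\mu$ and $Z$, apply Theorem~\ref{theo pei} to $\psi_\delta$, and let $\delta\to 0$. The paper's ``main'' proof, given in Subsection~\ref{subsection A}, instead re-runs the Pfister--Sullivan/Pei--Chen machinery directly for the asymptotically additive sequence, proving an upper bound $P_f({}^K G,\Phi)\le\sup_{\mu\in K}\{h_\mu(f)+\Phi^*(\mu)\}$ (Theorem~\ref{thm specification/saturated}) and a matching lower bound on $G$ (Theorem~\ref{theorem B2}) by constructing a Moran-type fractal $G\subset G_K$ and estimating $M(G,s,\Phi,n,\varepsilon)$ from below; that route is self-contained but substantially longer. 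Your route is shorter and cleaner precisely because it delegates all the combinatorics to Theorem~\ref{theo pei}; you also correctly flag the one step that deserves care — that the bound $\|\phi_n-S_n\psi_\delta\|\le\delta n$ is only \emph{eventual}, which is harmless because the Carath\'eodory sums $M(Z,\alpha,\varepsilon,N,\Phi)$ are taken with $N\to\infty$, so finitely many small $n$ never enter.
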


\begin{remark}
The proof of Theorem A is given in the subsection  \ref{subsection A} following the lines of \cite{Pfister2007}. We note here that the proof of Theorem A can also be given in a simpler way. We give the details below:

As $\Phi=\{\phi_n\}_n \in \mathfrak{C}^0(f,X)$ is a limit of additive sequences, for every each $k\in \mathbb{N}$, there exist $\psi_k$ such that
$$
\limsup_{n\rightarrow \infty}\frac{1}{n}\|\phi_n-S_n\psi_k\|<\frac{1}{k}.
$$
By Definition \ref{def pressure assinto.} of  asymptotically additive topological pressure  and the definition of topological pressure for continuous maps, it's not hard to see that for $Z\subset X$ we have
$$
\|P(Z,\Phi)-P(Z,\psi_k)\|<\frac{1}{k}\Rightarrow P(Z,\Phi)=\lim_{k\rightarrow \infty}P(Z,\psi_k)
$$
and that for each $\mu \in \mathcal{M}_f(X)$
$$
|\lim_{n\rightarrow \infty}\int \frac{\phi_n}{n}d\mu-\int \psi_k d\mu|<\frac{1}{k} \Rightarrow \Phi^*(\mu)=\lim_{k\rightarrow \infty}\int \psi_k d\mu
$$
By Theorem \ref{theo pei}, for each $k$
$$
P(G_K,\psi_k)=\inf\{h_{\mu}(f)+\int \psi_k d\mu:\mu \in K\}
$$
then
$$
P(G_K,\Phi)=\inf\{h_{\mu}(f)+ \Phi^*(\mu):\mu \in K\}
$$
\end{remark}

In the Section \ref{aplication} we will give some applications and examples of the main results on Cocycles under shift of finite type,  Nonconformal repeller and others.

\section{Preliminaries}
In this section, we remember some concepts and we given some notations for the proof of the Theorem A and Theorem B.




For $a,b \in \mathbb{N}$, $a\leq b$, we denote $[a,b]:=\{c\in \mathbb{N}:a\leq c\leq b\}$ and $\Lambda_n:=[0,n-1]$. The cardinality of a set $\Lambda$ is denoted by $\lvert\Lambda\rvert$ or $\#\Lambda$ .

We set $\langle \phi,\mu \rangle:=\int \phi d\mu$. We define a metric on $\mathcal{M}(X)$ by

$$d(\mu,\nu)=\|\mu-\nu\|:=\sum_{k\geq 1}2^{-k}\lvert\langle \psi_k,\mu-\nu\rangle\rvert.$$
where $\{\psi_k\}_{k\in \mathbb{N}}$ is a countable and dense set of continuous functions on these taking values on $[0,1]$. We use the metric on $X$ given by $d(x,y):=d(\delta_x,\delta_y)$.

We recall the definition of \textit{$g$-almost product property} introduced in \cite{Pfister2007}.
\begin{definition}
	Let $g:\mathbb{N}\rightarrow \mathbb{R}$ be a given nondecreasing unbounded map with the properties 
	\[g(n)<n\;\;\;\; \mbox{and}\;\;\; \lim_{n\rightarrow \infty}\frac{g(n)}{n}=0.\]
	The function $g$ is called blowup function. Let $x\in X$ and $\varepsilon>0$. The $g$-blowup of $B_n(x,\varepsilon)$ is the closed set
	\[B_n(g;x,\varepsilon):=\left\{y\in X: \exists \Lambda\subset \Lambda_n, \lvert\Lambda_n\backslash \Lambda\rvert\leq g(n)\;\mbox{and}\; \max\{d(f^j(x),f^j(y)):j\in \Lambda\leq \varepsilon\right\}.\]
\end{definition}

\begin{definition}
	A TDS $f$ has the $g$-almost product property with blowup function $g$, if there exists a nonincreasing function $m:\mathbb{R}^+\rightarrow \mathbb{N}$, such that for any $k\in \mathbb{N}$, any $x_1\in X,...,x_k\in X$, any positive $\varepsilon_1,...,\varepsilon_k$, and any integers $n_1\geq m(\varepsilon_1),...,n_k\geq m(\varepsilon_k)$, the intersection
	
	\[\bigcap_{j=1}^kf^{-M_{j-1}}B_{n_j}(g;x_j,\varepsilon_j)\neq \emptyset\] where $M_0=0, M_i=n_1+...+n_i$, $i=1,...,k-1$.
\end{definition}
For $\delta>0$ and $\varepsilon>0$, two points $x$ and $y$ are $(\delta,n,\varepsilon)$-separated if

\[\#\{j:d(f^j(x),f^j(y))>\varepsilon,0\leq j\leq n-1\}\geq \delta n.\]
\begin{remark}
	Note that $g$-almost product property is weaker than Bowen's specification property because it requires only partial shadowing of the specified orbit
	segments. All $\beta$-shifts satisfy the $g$-almost product property, see \cite{Pfister2007}.
\end{remark}

A subset $E$ is $(\delta,n,\varepsilon)$-separated if any pair of different points of $E$ are $(\delta,n,\varepsilon)$-separated.
Let $F\subseteq \mathcal{M}(X)$ be a neighborhood of $\nu\in \mathcal{M}_f(X)$, we set $X_{n,F}=\{x\in X:\mathcal{E}_n(x)\in F\}$.

\begin{proposition}\cite{Pfister2005}\label{prop separation}
	Let $\nu \in \mathcal{M}_f(X)$ be ergodic and $h^*<h_{\nu}(f)$. Then there exist $\delta^*>0$ and $\varepsilon^*>0$ so that for each neighborhood $F$ of $\nu$ in $\mathcal{M}(X)$, there exists $n_{F,\nu}^*\in \mathbb{N}$ such that for any $n\geq n_{F,\nu}^*\in \mathbb{N}$, there exists a $(\delta^*,n,\varepsilon^*)$-separated set $\Gamma_n$, such that
	
	\[\Gamma_n\subset X_{n,F} \;\;\mbox{and}\;\; \lvert\Gamma_n\rvert\geq e^{nh^*}.\]
\end{proposition}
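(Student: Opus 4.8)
The plan is to build $\Gamma_n$ from partition cylinders via a Shannon--McMillan--Breiman count, then thin it out by a Gilbert--Varshamov coding argument so that the surviving cylinders disagree at a positive density of coordinates, and finally convert that disagreement into a genuine $\varepsilon^*$-gap at linearly many times using a Birkhoff ``deep points'' estimate. Write $h:=h_\nu(f)$ and $\epsilon:=(h-h^*)/4>0$. First I would fix a finite measurable partition $\xi=\{P_1,\dots,P_m\}$ with $\nu(\partial P_i)=0$ for all $i$ and $h_\nu(f,\xi)>h-\epsilon$; such a $\xi$ exists by Kolmogorov--Sinai (refine partitions into balls whose bounding spheres are $\nu$-null). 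Writing $\xi(x)$ for the atom of $\xi$ through $x$, and putting $\gamma(\rho):=H_2(\rho)+\rho\log m$ (with $H_2$ the binary entropy, so that $(n+1)e^{n\gamma(\rho)}$ bounds the number of length-$n$ words over $m$ symbols within Hamming distance $\lfloor\rho n\rfloor$ of a fixed word), I would then pick $\rho\in(0,1/2)$ small enough that $\gamma(\rho)<\epsilon$, next pick $\varepsilon^*>0$ small enough that the shell $Y:=\{x:\overline{B}(x,\varepsilon^*)\not\subseteq\xi(x)\}$ satisfies $\nu(Y)<\rho/3$, and finally set $\delta^*:=\rho/2$. This is legitimate because $Y$ decreases as $\varepsilon^*\downarrow 0$ to a subset of $\partial\xi$, which is $\nu$-null; note that $\delta^*$ and $\varepsilon^*$ depend on $\nu$ and $h^*$ alone, not on $F$.

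Given the neighbourhood $F$, define for each $n$ the good set $G_n$ of points $x$ satisfying all of: $\mathcal{E}_n(x)\in F$; $\#\{0\le j<n:f^j(x)\in Y\}<\tfrac{\rho}{3}n$; and $\nu(A_n(x))\le e^{-n(h_\nu(f,\xi)-\epsilon)}$, where $A_n(x)$ denotes the atom of $\bigvee_{j=0}^{n-1}f^{-j}\xi$ containing $x$. By Birkhoff's theorem (which gives $\mathcal{E}_n(x)\to\nu$ in the weak topology for $\nu$-a.e.\ $x$, hence $\mathcal{E}_n(x)\in F$ eventually, and controls the frequency of visits to $Y$) together with the Shannon--McMillan--Breiman theorem for the ergodic system $(X,f,\nu)$, one has $\nu(G_n)\to 1$. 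Let $\mathcal{W}_n$ be the family of atoms of $\bigvee_{j=0}^{n-1}f^{-j}\xi$ that meet $G_n$. These atoms are pairwise disjoint, cover $G_n$, and each is of the form $A_n(x)$ for some $x\in G_n$, hence has $\nu$-measure at most $e^{-n(h_\nu(f,\xi)-\epsilon)}$; therefore $|\mathcal{W}_n|\ge\nu(G_n)\,e^{n(h_\nu(f,\xi)-\epsilon)}\ge\nu(G_n)\,e^{n(h-2\epsilon)}$.

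Now I would run a greedy extraction: choose $\mathcal{S}_n\subseteq\mathcal{W}_n$ maximal with respect to the property that the $\xi$-names of its members are pairwise at Hamming distance $>\lfloor\rho n\rfloor$. By maximality every cylinder of $\mathcal{W}_n$ lies within Hamming distance $\lfloor\rho n\rfloor$ of some member of $\mathcal{S}_n$, and each member of $\mathcal{S}_n$ absorbs at most $(n+1)e^{n\gamma(\rho)}$ cylinders this way, so $|\mathcal{S}_n|\ge|\mathcal{W}_n|\,(n+1)^{-1}e^{-n\gamma(\rho)}\ge\nu(G_n)\,(n+1)^{-1}e^{n(h-3\epsilon)}=\nu(G_n)\,(n+1)^{-1}e^{n(h^*+\epsilon)}$, which exceeds $e^{nh^*}$ for all $n$ past some $n^*_{F,\nu}$ (chosen so that $\nu(G_n)e^{n\epsilon}\ge n+1$ and the combinatorial estimates hold). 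Pick $x_C\in C\cap G_n$ for each $C\in\mathcal{S}_n$ and set $\Gamma_n:=\{x_C:C\in\mathcal{S}_n\}$; then $\Gamma_n\subseteq G_n\subseteq X_{n,F}$ and $|\Gamma_n|\ge e^{nh^*}$. To verify $(\delta^*,n,\varepsilon^*)$-separation, take $C\ne C'$ in $\mathcal{S}_n$: their $\xi$-names disagree on a set $J$ with $|J|>\lfloor\rho n\rfloor$, and since $x_C\in G_n$ at most $\tfrac{\rho}{3}n$ of these $j$ have $f^j(x_C)\in Y$, so for at least $|J|-\tfrac{\rho}{3}n\ge\tfrac{\rho}{2}n=\delta^* n$ indices $j$ we simultaneously have $\overline{B}(f^j(x_C),\varepsilon^*)\subseteq\xi(f^j(x_C))$ and $f^j(x_{C'})\in\xi(f^j(x_{C'}))\ne\xi(f^j(x_C))$, whence $d(f^j(x_C),f^j(x_{C'}))>\varepsilon^*$. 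Thus $\Gamma_n$ is $(\delta^*,n,\varepsilon^*)$-separated.

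I expect the genuine crux — and the reason a naive ``one point per cylinder'' choice fails — to be exactly this last conversion of distinctness of cylinders into an $\varepsilon^*$-gap at linearly many times: it needs both halves of the construction, the Gilbert--Varshamov thinning (so chosen names differ in $\gtrsim\rho n$ coordinates, not merely one) and the ``deep points'' frequency bound (so a definite fraction of those coordinates records a true separation rather than a near-miss across a shared boundary). The remaining care is bookkeeping: getting the order $\epsilon\to\xi\to\rho\to\varepsilon^*\to\delta^*$ right and invoking the regularity fact $\nu(Y)\to0$ as $\varepsilon^*\to0$, which rests on $\nu(\partial\xi)=0$; with that ordering the budget $h-h^*$ is spent as $\epsilon$ (approximation of $h$ by $h_\nu(f,\xi)$), plus $\epsilon$ (cushion), plus $\gamma(\rho)<\epsilon$ (coding loss), plus a final $\epsilon$ left over to beat $e^{nh^*}$.
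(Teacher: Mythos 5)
Your proof is correct, and since the paper does not prove this proposition at all but quotes it from Pfister--Sullivan \cite{Pfister2005}, the relevant comparison is with that source: your argument reconstructs essentially their proof --- a Katok-style cylinder count via Shannon--McMillan--Breiman and Birkhoff, a Hamming-ball (Gilbert--Varshamov) thinning, and the boundary-shell frequency bound that converts disagreement of $\xi$-names into genuine $(\delta^*,n,\varepsilon^*)$-separation, with $\delta^*,\varepsilon^*$ independent of $F$. The only points worth polishing are routine: handle $h_\nu(f)=\infty$ by replacing $h$ with a finite value strictly between $h^*$ and $h_\nu(f)$; note that $Y$ is measurable since $Y\cap P=\{x\in P:\ d(x,P^c)\le \varepsilon^*\}$ for each atom $P$; and take the closed ball in the definition of $Y$ so that the conclusion is the strict inequality $d(f^j(x_C),f^j(x_{C'}))>\varepsilon^*$ demanded by the definition of $(\delta^*,n,\varepsilon^*)$-separation.
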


Let $N(F,\delta,n,\varepsilon)$ be the maximal cardinality of an $(\delta,n,\varepsilon)$-separated set of $X_{n,F}$ and 
$N(F,n,\varepsilon)$  the maximal cardinality of an $(n,\varepsilon)$-separated set of $X_{n,F}$.

\begin{definition}
	A TDS $f$ has uniform separation property if the following holds. For any $\eta>0$, there exists $\delta^*>0$ and $\varepsilon^*>0$ so that for $\mu$ ergodic and any neighborhood $F\subset \mathcal{M}_f(X)$ of $\mu$, there exists $n_{F,\mu,\eta}^*$ such that, for $n\geq n_{F,\mu,\eta}^*$, we have
	
	\[N(F,\delta^*,n,\varepsilon^*)\geq 2^{n(h_{\mu}(f)-\eta)}.\]
\end{definition}

The previous definition says that the conclusion of Proposition \ref{prop separation} holds uniformly(i.e. $\delta^*, \varepsilon^*$ does not depend of the measure $\nu$) for a TDS with the uniform separation time.

\begin{remark}
	It is easy to see  that uniform separation implies that $h_{top}(f)$ is finite.
	
\end{remark}
\begin{remark}
	In \cite{Pfister2007}, the authors proved that expansive and asymptotically $h$-expansive maps have the uniform separation property.
	
\end{remark}
We define
\[\underline{s}(\nu,\varepsilon)=\inf_{\nu \in F}\liminf_{n\rightarrow \infty}\frac{1}{n}\log N(F,n,\varepsilon)\;\;\;\;\mbox{and}\;\;\;\;\overline{s}(\nu,\varepsilon)=\inf_{\nu \in F}\limsup_{n\rightarrow \infty}\frac{1}{n}\log N(F,n,\varepsilon)\]
where the infimum is take over any base of neighborhood of $\nu$.
Let $\overline{s}(\mu)=\lim_{\varepsilon \rightarrow 0}\overline{s}(\nu,\varepsilon)$ and $\underline{s}(\mu)=\lim_{\varepsilon \rightarrow 0}\underline{s}(\nu,\varepsilon)$.

\begin{proposition}[\cite{Pfister2007}]\label{proposition limit entropy}
	If $\mu \in \mathcal{M}_f(X)$, then $\overline{s}(\mu)\leq h_{\mu}(f)$.
\end{proposition}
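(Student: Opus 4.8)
The plan is to prove the proposition by localising the upper‑bound (Misiurewicz) half of the variational principle — as in Walters' book, Theorem~8.6 — to the sets $X_{n,F}$. Since $\overline{s}(\mu)=\lim_{\varepsilon\to 0}\overline{s}(\mu,\varepsilon)$ and $\overline{s}(\mu,\varepsilon)$ is monotone in $\varepsilon$, it suffices to show that for every fixed $\varepsilon>0$ one has $\overline{s}(\mu,\varepsilon)\le h_{\mu}(f)$; that is, given $q\in\N$ and $\eta>0$, to exhibit one neighbourhood $F$ of $\mu$ for which $\limsup_{n\to\infty}\tfrac1n\log N(F,n,\varepsilon)$ is at most $\tfrac1q H_\mu\big(\bigvee_{i=0}^{q-1}f^{-i}\xi\big)+\tfrac{\eta}{q}$ for a suitable partition $\xi$, and then to let $\eta\to 0$ and $q\to\infty$.

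Fix $\varepsilon>0$ and choose a finite Borel partition $\xi=\{C_1,\dots,C_k\}$ of $X$ with $\diam C_i<\varepsilon$ and $\mu(\partial C_i)=0$ for all $i$ (cover $X$ by balls whose radii are chosen so that the bounding spheres are $\mu$‑null, then disjointify). Because $\mu$ is $f$‑invariant, $\mu(\partial f^{-j}C_i)\le\mu(f^{-j}\partial C_i)=\mu(\partial C_i)=0$, so $\mu$ is a continuity partition for every join $\bigvee_{i=0}^{q-1}f^{-i}\xi$ at once, and hence $\nu\mapsto H_\nu\big(\bigvee_{i=0}^{q-1}f^{-i}\xi\big)$ is continuous at $\mu$ in the weak$^*$ topology. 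Now fix $q$ and $\eta>0$; using this continuity together with the existence of a convex neighbourhood base of $\mu$ in $\mathcal M(X)$, pick a \emph{convex} neighbourhood $F$ of $\mu$ with $H_\nu\big(\bigvee_{i=0}^{q-1}f^{-i}\xi\big)<H_\mu\big(\bigvee_{i=0}^{q-1}f^{-i}\xi\big)+\eta$ for all $\nu\in F$. For each $n$ let $E_n\subset X_{n,F}$ be a maximal $(n,\varepsilon)$‑separated set, so $\#E_n=N(F,n,\varepsilon)=:N_n$; set $\sigma_n=\tfrac1{N_n}\sum_{x\in E_n}\delta_x$ and $\mu_n=\tfrac1n\sum_{j=0}^{n-1}f^j_*\sigma_n=\tfrac1{N_n}\sum_{x\in E_n}\mathcal E_n(x)$. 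Since each $\mathcal E_n(x)\in F$ and $F$ is convex, $\mu_n\in F$.

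Two distinct points of $E_n$ cannot lie in the same atom of $\bigvee_{i=0}^{n-1}f^{-i}\xi$ (that would force $d(f^j x,f^j y)\le\diam\xi<\varepsilon$ for all $0\le j\le n-1$), so $\sigma_n$ is uniform on $N_n$ distinct atoms and $H_{\sigma_n}\big(\bigvee_{i=0}^{n-1}f^{-i}\xi\big)=\log N_n$. The classical counting estimate — split $\{0,\dots,n-1\}$ into the $q$ arithmetic progressions of step $q$, use subadditivity of $H$ over joins, average over the $q$ starting residues, and apply concavity of $\nu\mapsto H_\nu(\cdot)$ together with $\mu_n=\tfrac1n\sum_j f^j_*\sigma_n$ — gives, for $n>q$,
\[
\frac1n\log N_n=\frac1n H_{\sigma_n}\Big(\bigvee_{i=0}^{n-1}f^{-i}\xi\Big)\le\frac1q H_{\mu_n}\Big(\bigvee_{i=0}^{q-1}f^{-i}\xi\Big)+\frac{2q\log\#\xi}{n}.
\]
Since $\mu_n\in F$, the right side is $<\tfrac1q\big(H_\mu(\bigvee_{i=0}^{q-1}f^{-i}\xi)+\eta\big)+\tfrac{2q\log\#\xi}{n}$. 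Taking $\limsup_{n\to\infty}$ yields $\overline s(\mu,\varepsilon)\le\limsup_n\tfrac1n\log N(F,n,\varepsilon)\le\tfrac1q H_\mu\big(\bigvee_{i=0}^{q-1}f^{-i}\xi\big)+\tfrac{\eta}{q}$; letting $\eta\to0$ and then $q\to\infty$ (using subadditivity of $q\mapsto H_\mu(\bigvee_{i=0}^{q-1}f^{-i}\xi)$) gives $\overline s(\mu,\varepsilon)\le h_\mu(f,\xi)\le h_\mu(f)$, and since $\varepsilon>0$ was arbitrary, $\overline s(\mu)\le h_\mu(f)$.

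The only genuinely delicate point is the passage to the weak$^*$ limit: the functional $\nu\mapsto H_\nu\big(\bigvee_{i=0}^{q-1}f^{-i}\xi\big)$ is not upper semicontinuous in general, so the proof must build $\xi$ as a $\mu$‑continuity partition (to obtain continuity at $\mu$) and, simultaneously, restrict to a convex $F$ (to keep the averaged empirical measures $\mu_n$ inside $F$). Everything else — reducing to the count of $(n,\varepsilon)$‑separated subsets of $X_{n,F}$, the block decomposition, and the subadditivity/concavity bookkeeping — is exactly the standard easy‑inequality argument of the variational principle, now carried out relative to $X_{n,F}$ instead of all of $X$.
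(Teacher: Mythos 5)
Your argument is correct and is essentially the proof given by Pfister and Sullivan in \cite{Pfister2007}, which the paper cites for this proposition without reproducing a proof: the choice of a $\mu$-continuity partition of diameter less than $\varepsilon$, the use of a convex neighbourhood $F$ so that the averaged empirical measure $\mu_n=\tfrac{1}{N_n}\sum_{x\in E_n}\mathcal{E}_n(x)$ remains in $F$, and the Misiurewicz block-decomposition estimate are exactly the ingredients of their argument.
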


\begin{definition}
	The ergodic measures are entropy-dense if for any $\nu \in \mathcal{M}_f(X)$, each neighborhood $F$ of $\nu$, and $h^*<h_{\nu}(f)$, there exists ergodic measure $\rho \in F$ such that $h^*<h_{\rho}(f)$.
\end{definition}

\begin{proposition}[\cite{Pfister2007}]
	Assume that $f$ has the uniform separation property and the ergodic measures are entropy-dense. For any $\eta>0$, there exist $\delta^*>0$ and $\varepsilon^*>0$ so that for $\mu \in \mathcal{M}_f(X)$ and any neighborhood $F\subset \mathcal{M}_f(X)$ of $\mu$, there exists $\eta_{F,\mu,\eta}^*$ such that
	\[N(F;\delta^*,n,\varepsilon^*)\geq e^{n(h_{\mu}(f)-\eta)}\;\;\mbox{if}\;\; n\geq \eta_{F,\mu,\eta}^*. \]
\end{proposition}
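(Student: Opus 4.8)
The plan is to deduce this from the uniform separation property by approximating the (possibly non-ergodic) measure $\mu$ by a nearby ergodic measure via entropy-density, and then transporting the separation estimate from the approximant back to $\mu$. The feature that makes this transport work is that the constants $\delta^*,\varepsilon^*$ produced by the uniform separation property depend only on the auxiliary parameter, not on the measure: hence a $(\delta^*,n,\varepsilon^*)$-separated set living in $X_{n,F'}$ for a neighborhood $F'$ of the ergodic approximant is, as soon as $F'\subseteq F$, also a $(\delta^*,n,\varepsilon^*)$-separated set of $X_{n,F}$, built with exactly the constants required for $\mu$.

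Concretely, fix $\eta>0$ and let $\delta^*>0$, $\varepsilon^*>0$ be the constants given by the uniform separation property for the parameter $\eta/2$; these are the constants I claim work in the statement. Now fix $\mu\in\mathcal{M}_f(X)$ and an open neighborhood $F\subseteq\mathcal{M}_f(X)$ of $\mu$. Applying entropy-density of the ergodic measures to $\mu$, to $F$, and to $h^*=h_\mu(f)-\tfrac{\eta}{2}<h_\mu(f)$, we get an ergodic measure $\rho\in F$ with $h_\rho(f)>h_\mu(f)-\tfrac{\eta}{2}$. Choose an open set $F'$ with $\rho\in F'\subseteq F$; since $F'\subseteq F$ we have $X_{n,F'}\subseteq X_{n,F}$ for every $n$, so $N(F;\delta^*,n,\varepsilon^*)\ge N(F';\delta^*,n,\varepsilon^*)$.

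Finally, apply the uniform separation property to the ergodic measure $\rho$, its neighborhood $F'$, and the parameter $\eta/2$: there is $n^*=n^*_{F',\rho,\eta/2}$ such that $N(F';\delta^*,n,\varepsilon^*)\ge e^{n(h_\rho(f)-\eta/2)}$ for all $n\ge n^*$. Setting $\eta^*_{F,\mu,\eta}:=n^*$, we obtain for all $n\ge\eta^*_{F,\mu,\eta}$
\[
N(F;\delta^*,n,\varepsilon^*)\ \ge\ N(F';\delta^*,n,\varepsilon^*)\ \ge\ e^{n(h_\rho(f)-\eta/2)}\ \ge\ e^{n(h_\mu(f)-\eta)},
\]
which is the claimed inequality. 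This is a soft reduction, so there is no serious obstacle; the two points to keep in view are that the separation constants produced by uniform separation are genuinely measure-independent — this is precisely what licenses using the estimate obtained for $\rho$ as an estimate for $\mu$ — and that the slack $\eta/2$ is spent twice, once in replacing $h_\mu(f)$ by $h_\rho(f)$ through entropy-density and once inside the uniform separation bound, the two halves combining to the $\eta$ in the exponent.
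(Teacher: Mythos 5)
Your argument is correct, and it is the intended one. The paper does not reproduce a proof of this proposition — it is quoted directly from Pfister and Sullivan — but your reduction is exactly the natural (and in fact Pfister--Sullivan's) route: use entropy-density to replace the possibly non-ergodic $\mu$ by an ergodic $\rho\in F$ with $h_\rho(f)>h_\mu(f)-\eta/2$, exploit that the uniform-separation constants $\delta^*,\varepsilon^*$ depend only on the slack parameter and not on the ergodic measure, apply uniform separation to $\rho$ inside a sub-neighborhood $F'\subseteq F$, and then use the monotonicity $N(F;\delta^*,n,\varepsilon^*)\geq N(F';\delta^*,n,\varepsilon^*)$ for $F'\subseteq F$ together with $h_\rho(f)-\eta/2\geq h_\mu(f)-\eta$ to conclude. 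The only cosmetic point worth flagging is that the definition of uniform separation in the paper is written with base $2$ in the exponent while this proposition and your proof use base $e$; this is an inconsistency already present in the paper's statement, not an error of yours, and the argument is unaffected once a single base is fixed.
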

\begin{proposition}[\cite{Pfister2007}]
	If the uniform separation property is true and the ergodic measures are entropy-dense, then $s(\mu):=\overline{s}(\mu)=\underline{s}(\mu)$ is well-defined and $s(\mu)=h_{\mu}(f)$ for all $\mu \in \mathcal{M}_f(X)$.
\end{proposition}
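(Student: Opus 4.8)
The plan is to establish the chain $h_\mu(f)\le\underline{s}(\mu)\le\overline{s}(\mu)\le h_\mu(f)$, which forces all four quantities to coincide. Two of the links are essentially free. The middle one, $\underline{s}(\mu)\le\overline{s}(\mu)$, comes from $\liminf\le\limsup$ inside the defining expressions, a relation that survives passing to the infimum over a neighbourhood base and to the limit $\varepsilon\to 0$. The right-hand one, $\overline{s}(\mu)\le h_\mu(f)$, is exactly Proposition \ref{proposition limit entropy} and needs no hypothesis beyond $\mu\in\mathcal{M}_f(X)$. Moreover, since shrinking $\varepsilon$ can only enlarge an $(n,\varepsilon)$-separated set, $\varepsilon\mapsto N(F,n,\varepsilon)$ is non-increasing, hence $\varepsilon\mapsto\underline{s}(\mu,\varepsilon)$ and $\varepsilon\mapsto\overline{s}(\mu,\varepsilon)$ are non-increasing and bounded above (by $h_{\mathrm{top}}(f)<\infty$, which is finite under uniform separation), so the limits defining $\underline{s}(\mu)$ and $\overline{s}(\mu)$ exist and equal the respective suprema over $\varepsilon>0$. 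Thus the only real content is the inequality $\underline{s}(\mu)\ge h_\mu(f)$.

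To prove it, I would fix $\eta>0$ and invoke the preceding proposition (the one using the uniform separation property together with entropy-density of the ergodic measures) to obtain $\delta^*>0$ and $\varepsilon^*>0$, \emph{depending only on $\eta$}, such that for every neighbourhood $F$ of $\mu$ there is $n_0=\eta^*_{F,\mu,\eta}$ with $N(F;\delta^*,n,\varepsilon^*)\ge e^{n(h_\mu(f)-\eta)}$ whenever $n\ge n_0$. The elementary bridge from $(\delta^*,n,\varepsilon^*)$-separation to ordinary $(n,\varepsilon^*)$-separation is that, once $\delta^* n\ge 1$, two points disagreeing by more than $\varepsilon^*$ along at least $\delta^* n\ge 1$ coordinates in $\Lambda_n$ in particular disagree by more than $\varepsilon^*$ at some coordinate; hence any $(\delta^*,n,\varepsilon^*)$-separated subset of $X_{n,F}$ is $(n,\varepsilon^*)$-separated. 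Therefore $N(F,n,\varepsilon^*)\ge N(F;\delta^*,n,\varepsilon^*)\ge e^{n(h_\mu(f)-\eta)}$ for all large $n$, which yields $\liminf_{n\to\infty}\frac1n\log N(F,n,\varepsilon^*)\ge h_\mu(f)-\eta$.

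Taking the infimum over a base of neighbourhoods of $\mu$ gives $\underline{s}(\mu,\varepsilon^*)\ge h_\mu(f)-\eta$, and by the monotonicity in $\varepsilon$ noted above, $\underline{s}(\mu)=\sup_{\varepsilon>0}\underline{s}(\mu,\varepsilon)\ge\underline{s}(\mu,\varepsilon^*)\ge h_\mu(f)-\eta$. Letting $\eta\to 0$ gives $\underline{s}(\mu)\ge h_\mu(f)$, and combined with $h_\mu(f)\ge\overline{s}(\mu)\ge\underline{s}(\mu)$ this forces $\underline{s}(\mu)=\overline{s}(\mu)=h_\mu(f)$; in particular $s(\mu)$ is well-defined and equals $h_\mu(f)$ for every $\mu\in\mathcal{M}_f(X)$.

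The step I expect to require the most care is one of quantifier bookkeeping rather than analysis: one must be sure that $\delta^*$ and $\varepsilon^*$ produced by the preceding proposition are selected once and for all from $\eta$ alone — this is precisely the uniformity encoded in the uniform separation property — while the threshold $n_0$ may depend on $F$, $\mu$ and $\eta$, which is harmless since only a $\liminf$ in $n$ is taken. No additional dynamical input is needed, because the difficult construction of exponentially many separated orbit segments inside $X_{n,F}$ has already been carried out in the earlier propositions.
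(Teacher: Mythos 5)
Your proof is correct, and it is essentially the proof Pfister and Sullivan give for this statement (it appears in their paper as the corollary following what you call ``the preceding proposition''); the paper under review simply cites the result without reproducing an argument. The chain $h_\mu(f)\le\underline{s}(\mu)\le\overline{s}(\mu)\le h_\mu(f)$ is exactly the right decomposition, your bridge from $(\delta^*,n,\varepsilon^*)$-separation to $(n,\varepsilon^*)$-separation is sound (indeed any $\delta^*>0$ suffices, since a nonnegative integer bounded below by a positive number is at least $1$), and your quantifier bookkeeping — $\delta^*,\varepsilon^*$ depending only on $\eta$ while $n_0$ may depend on $F,\mu,\eta$ — is precisely the uniformity that the hypothesis supplies and that the $\liminf$ in $n$ tolerates.
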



In \cite{Zhao2011}, the authors gives a definition of asymptotically additive  topological pressure for an set $Z\subseteq X$ as in \cite{Bar06} and \cite{Mum2006}, motivated by work of \cite{Thomp2010} and \cite{Zhao2008}. Fix $\varepsilon>0$. Let $Z\subset X$, $\Gamma=\{B_{n}(x,\varepsilon)\}_n$ a cover of $Z$ and $\Phi=(\phi_n)_n$  an asymptotically additive sequence. For $\alpha \in \mathbb{R}$  we define the following quantities:

\[Q(Z,\alpha, \Gamma, \Phi):= \sum_{B_{n}(x,\varepsilon)\in \Gamma}e^{-\alpha n+\sup_{x\in B_{n}(x,\varepsilon)}\phi_{n}(x)}\,\;\;\mbox{and}\;\;\]

\[M(Z,\alpha,\varepsilon,N,\Phi)=\inf_{\Gamma}Q(Z,\alpha, \Gamma, \Phi).\]
where the infimum is taken over all covers finite or countable of the form $\Gamma=\{B_{n}(x,\varepsilon)\}_n$ of $Z$ with $n(\Gamma):= \min_n\{n\}\geq N$. We define
\[m(Z,\alpha,\varepsilon,\Phi)=\lim_{N\rightarrow \infty} M(Z,\alpha,\varepsilon,N,\Phi).\]
As the function $M(Z,\alpha,\varepsilon,N,\Phi)$ is non-decreasing on $N$, then the limit always exist. We can show that
\[
P_f(Z,\Phi,\varepsilon):=\inf\{\alpha:m(Z,\alpha,\varepsilon,\Phi)=0\}=\sup\{\alpha:m(Z,\alpha,\varepsilon,\Phi)=\infty\}.\]

\begin{definition}\label{def pressure assinto.}
	The topological pressure of $\Phi$ on $Z$ is given by	
$$
P_f(Z,\Phi)=\lim_{\varepsilon \rightarrow 0}P_f(Z,\Phi, \varepsilon).$$
\end{definition}
A sequence $\Phi$ is an almost-additive sequence if 
\[\phi_n(x)+\phi_m(f^n(x))-C_{\Phi}\leq \phi_{n+m}(x)\leq \phi_n(x)+\phi_m(f^n(x))+C_{\Phi},\]
for all $x\in X$, $n,m \in \mathbb{N}$ and some constant $C_{\Phi}$.
\begin{remark}
	
	Feng and Huang proved, in \cite{Feng2010}, that an almost-additive sequence is indeed asymptotically additive. Then, by Proposition 4.7 in \cite{Cao08} we have  variation principle for topological pressure of asymptotically additive sequence:
\begin{eqnarray}\label{eq variational principle}
P_f(\Phi):=P_f(X,\Phi)=\sup\{h_{\mu}(f)+\Phi^{*}(\mu):\mu \in \mathcal{M}_f(X)\}.\end{eqnarray}
	
	\begin{remark}
		A asymptotically additive sequence may not be  almost-additive sequence. The next examples illustrate this.
	\end{remark}
	
	\begin{example}\label{Ban-Cao-Hu2010}\cite{Ban-Cao-Hu2010}
		Let a $C^1$ map $f:M\rightarrow M$ defined on $C^{\infty}$ $m$-dimensional Riemannian manifold. Let $\Lambda\subset M$ is a compact $f$-invariant subset. The set $\Lambda$ is called an average conformal repeller if for any $f$-invariant ergodic measure $\mu$ the Lyapunov exponents of $\mu$ $\lambda_i(\mu)$, $i=1,...,m$ are equal and positive. The authors show that the limit
		
		\[\lim_{n\rightarrow \infty}\log\frac{\|Df^n(x)\|}{\|Df^n(x)^{-1}\|^{-1}}=0\]
		converges uniformly on the average conformal repeller $\Lambda$. It is not hard to check hat the sequences  $\Phi_1=(\log \|Df^n(x)^{-1}\|^{-1})_n$ or $\Phi_2=(\log \|Df^n(x)\|)_n$ are asymptotically additive but, these may not be almost additive.
	\end{example}
	
	\begin{example}\label{Barreira1996}\cite{Bar06}
		Let $(X,f,d)$ be a TDS. Assume that $f$ is expanding on $X$, in the sense that there exists constants $a\geq b\geq 1$ and $\varepsilon_0>0$ such that
		
		\[B(f(x),b\varepsilon)\subset f(B(x,\varepsilon))\subset B(f(x),a\varepsilon)\;\;\;\; \mbox{for all}\; x\in X \,\, \mbox{and}\,\, 0<\varepsilon<\varepsilon_0.\]
		Then, $X$ has a Markov partition $R_1,..,R_m$ of arbitrarily small diameters, see \cite{Bar06}. We define a $m \times m$ matrix $A=(a_{ij})$ with $a_{ij}=1$ if $R_i\cap f^{-1}(R_j)\neq \emptyset$ and $a_{ij}=0$ otherwise. For each $\omega=(i_1,i_2,...)\in \Sigma_A^+$, $n\geq 1$ and $k\geq 0$, put
		
		\[\underline{\lambda}_k(\omega,n)=\min \inf\left\{\frac{d(x,y)}{d(f^n(x),f^n(y)}:x,y \in R_{j_1...j_{n+k}}\,\,\mbox{and}\,\, x\neq y\right\}\,\,\mbox{and}\]
		\[\overline{\lambda}_k(\omega,n)=\max \sup\left\{\frac{d(x,y)}{d(f^n(x),f^n(y)}:x,y \in R_{j_1...j_{n+k}}\,\,\mbox{and}\,\, x\neq y\right\},\]
		where $R_{j_1...j_n}=\{x\in X: f^{i-1}(x)\in R_{j_i}\,\,\mbox{for}\,\,i=1,...,n\}$ is the cylinder of length $n$, and the maximum and minimum are taken over the $j_1...j_{n+k}\in \Sigma_A^+$ such that $j_1...j_{n}=(i_1...i_n)$. We say that $f$ is asymptotically conformal if there exists $k\geq 0$ such that the limit
		\[\lim_{n\rightarrow\infty} \frac{1}{n}\log\frac{\overline{\lambda}_k(\omega,n)}{\underline{\lambda}_k(\omega,n)}=0,\]
		converges uniformly on $\Sigma_A^+$. As $\Phi_1=(\log\overline{\lambda}_k(\omega,n))_n$ is sub-additive and $\Phi_2=(\log\underline{\lambda}_k(\omega,n))_n$ is a sup-additive its easy to see that $\Phi_1=(\log\overline{\lambda}_k(\omega,n))_n$ or $\Phi_2=(\log\underline{\lambda}_k(\omega,n))_n$ are asymptotically additive, but may not are almost additive.
	\end{example}

\end{remark}




\section{Proofs}

\subsection{Proof of Theorem A}\label{subsection A}
In this subsection we will prove Theorem A. We will give some results that help us in the proof.
\begin{proposition}[Corollary 3.1 of \cite{Pfister2007}]\label{prop. separation uniform}
	Assume that $(X,d,T)$ has the uniform separation property, and that the ergodic measures are entropy dense. For any $\eta>0$, there exist $\delta^*>0$ and $\varepsilon^*>0$ so that for $\mu \in \mathcal{M}_f(X)$ and any neighborhood $F\subset \mathcal{M}(X)$ of $\mu$, there exists $n_{F,\mu,\eta}$ such that 
	
	\[N(F;\delta^*,n,\varepsilon^*)\geq e^{n(h_{\mu}(f)-\eta)}\;\;\;\mbox{if}\;\;\; n\geq n_{F,\mu,\eta}. \]
	For any $\mu \in \mathcal{M}_f(X)$,
	\[h_{\mu}(f)\leq \lim_{\varepsilon \rightarrow 0}\lim_{\delta \rightarrow 0}\inf_{\mu \in F}\liminf_{n\rightarrow \infty}\frac{1}{n}\log N(F;\delta,n,\varepsilon).\]
\end{proposition}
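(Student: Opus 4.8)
The plan is to handle the two assertions separately, noting that the first is essentially already in hand and the second is a limit manipulation sitting on top of it.

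\medskip

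\noindent\textbf{First assertion.} This is exactly Corollary 3.1 of \cite{Pfister2007}; indeed it coincides with the proposition recalled just above, the only cosmetic difference being that $F$ is now a neighborhood of $\mu$ in $\mathcal{M}(X)$ rather than in $\mathcal{M}_f(X)$, which is harmless since $F\cap\mathcal{M}_f(X)$ is again a neighborhood of $\mu$ in $\mathcal{M}_f(X)$. For completeness I would recall the argument. For ergodic $\mu$, the lower bound $N(F;\delta^*,n,\varepsilon^*)\ge e^{n(h_\mu(f)-\eta)}$ for large $n$ is Proposition \ref{prop separation} combined with the uniform separation property, which is precisely what forces $\delta^*,\varepsilon^*$ to depend only on $\eta$ and not on $\mu$. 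For a general $\mu\in\mathcal{M}_f(X)$ and a neighborhood $F$ of $\mu$, I would invoke entropy-density: choose an ergodic $\rho$ and a neighborhood $F'\subseteq F$ that is simultaneously a neighborhood of $\mu$ and of $\rho$, with $h_\rho(f)>h_\mu(f)-\eta$; apply the ergodic case to $\rho$ and $F'$; and use $X_{n,F'}\subseteq X_{n,F}$ to get $N(F;\delta^*,n,\varepsilon^*)\ge N(F';\delta^*,n,\varepsilon^*)\ge e^{n(h_\rho(f)-\eta')}$ for large $n$, with $\eta'$ chosen small enough that $h_\rho(f)-\eta'>h_\mu(f)-\eta$.

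\medskip

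\noindent\textbf{Second assertion.} Fix $\mu\in\mathcal{M}_f(X)$ and $\eta>0$, and let $\delta^*=\delta^*(\eta)$, $\varepsilon^*=\varepsilon^*(\eta)$ be as in the first part. For every neighborhood $F$ of $\mu$, taking $\tfrac1n\log$ and then $\liminf_{n\to\infty}$ in $N(F;\delta^*,n,\varepsilon^*)\ge e^{n(h_\mu(f)-\eta)}$ gives $\liminf_{n\to\infty}\tfrac1n\log N(F;\delta^*,n,\varepsilon^*)\ge h_\mu(f)-\eta$, and this survives taking the infimum over a neighborhood base of $\mu$. Now I would use the elementary monotonicity of $N(F;\delta,n,\varepsilon)$: a $(\delta_2,n,\varepsilon)$-separated set is $(\delta_1,n,\varepsilon)$-separated for $\delta_1\le\delta_2$, and a $(\delta,n,\varepsilon_2)$-separated set is $(\delta,n,\varepsilon_1)$-separated for $\varepsilon_1\le\varepsilon_2$, so $N(F;\delta,n,\varepsilon)$ is non-increasing in both $\delta$ and $\varepsilon$. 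Hence both $\lim_{\delta\to0}$ and $\lim_{\varepsilon\to0}$ are suprema, and
\[
\lim_{\varepsilon\to0}\lim_{\delta\to0}\inf_{\mu\in F}\liminf_{n\to\infty}\frac1n\log N(F;\delta,n,\varepsilon)\ \ge\ \inf_{\mu\in F}\liminf_{n\to\infty}\frac1n\log N(F;\delta^*,n,\varepsilon^*)\ \ge\ h_\mu(f)-\eta.
\]
Letting $\eta\to0$ yields the claim. (This is the reverse of Proposition \ref{proposition limit entropy}, so in fact $s(\mu)=h_\mu(f)$.)

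\medskip

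\noindent\textbf{Main obstacle.} There is no genuine dynamical difficulty left: all the substantive work — the almost-product/specification-type construction of large separated sets and the uniformization of the constants over $\mu$ — is already packaged into Proposition \ref{prop separation} and the uniform separation hypothesis. The only points that need care are (a) the entropy-density step, where the auxiliary ergodic measure and its neighborhood must be chosen so as not to drift away from $\mu$, and (b) the ordering of the limits in the second assertion together with the dependence of $\delta^*,\varepsilon^*$ on $\eta$, which becomes routine once the monotonicity of $N(F;\delta,n,\varepsilon)$ in $\delta$ and $\varepsilon$ is recorded.
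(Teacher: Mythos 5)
The paper itself does not prove this proposition: it is imported verbatim as Corollary~3.1 of Pfister and Sullivan \cite{Pfister2007}, and its first assertion already appears, unproved, among the recalled propositions in the Preliminaries. Your reconstruction is correct and agrees with the argument in that source: the ergodic case is the content of the uniform separation property, the general case follows by entropy density of ergodic measures, and the second assertion is a routine limit manipulation once one records that $N(F;\delta,n,\varepsilon)$ is non-increasing in both $\delta$ and $\varepsilon$. Two small remarks on the write-up. First, for ergodic $\mu$ you do not need to combine Proposition~\ref{prop separation} with uniform separation: the definition of uniform separation already states the conclusion with $\delta^*,\varepsilon^*$ depending only on $\eta$, and it subsumes Proposition~\ref{prop separation} under the standing hypotheses. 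Second, in the entropy-density step the auxiliary neighborhood $F'$ need only be a neighborhood of the ergodic $\rho$ with $F'\subseteq F$; requiring it to be \emph{simultaneously} a neighborhood of $\mu$ is unnecessary and would in general obstruct shrinking $F'$ tightly around $\rho$. Since entropy density places $\rho$ inside $F$, a small ball around $\rho$ contained in $F$ always exists, and $X_{n,F'}\subseteq X_{n,F}$ then delivers $N(F;\delta^*,n,\varepsilon^*)\ge N(F';\delta^*,n,\varepsilon^*)$, which is all that is used.
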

We define $^K G:=\{x \in X; \{\mathcal{E}_n(x)\}_n \;\mbox{has a limit-point on $K$}\}$. 

\begin{theorem}\label{thm specification/saturated}
	Let $(X,d,f)$ be  a TDS and $K\subset \mathcal{M}_f(X)$ be compact subset. Then,
	
	\[P_f(^K G,\Phi)\leq \sup\{h_{\mu}(f)+\Phi^{*}(\mu):\mu \in K\}.\]
	Consequently, $P_f(G_K,\Phi)\leq \inf\{h_{\mu}(f)+\Phi^{*}(\mu):\mu \in K\}$.
	
\end{theorem}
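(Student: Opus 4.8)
The plan is to prove the upper bound $P_f(^KG,\Phi)\le \sup\{h_\mu(f)+\Phi^*(\mu):\mu\in K\}$ directly from the definition of the asymptotically additive topological pressure, using a covering argument, and then deduce the statement for $G_K$ by observing that $G_K\subseteq {}^KG$ (every point whose set of limit measures equals $K$ certainly has a limit point in $K$, since $K\ne\emptyset$), together with the elementary fact that when $x\in G_K$ every empirical measure $\mathcal{E}_n(x)$ eventually lies in any prescribed neighborhood of $K$, which will let us replace $\sup$ by $\inf$ over $K$ — more precisely, $G_K\subseteq {}^KG_{K'}$ for every closed $K'$ containing $K$, so taking a decreasing sequence of neighborhoods of $K$ gives $P_f(G_K,\Phi)\le \inf_{K'}\sup_{\mu\in K'}\{h_\mu(f)+\Phi^*(\mu)\}$ and then one squeezes $K'$ down to $K$ using upper semicontinuity of $\mu\mapsto h_\mu(f)+\Phi^*(\mu)$ on the compact set $K$ combined with compactness/continuity arguments; but since the displayed conclusion only claims $\le\inf_{\mu\in K}\{\dots\}$, it suffices to note $G_K\subseteq{}^KG$ and bound $\sup_{\mu\in K}$ by $\inf_{\mu\in K}$ along neighborhoods, or argue separately for each $\mu\in K$ via $G_K\subseteq {}^{\{\mu\}}G$-type localization — this second reduction is the cleaner route.

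First I would fix $\varepsilon>0$ and a number $s>\sup\{h_\mu(f)+\Phi^*(\mu):\mu\in K\}$, and show $m({}^KG,s,\varepsilon,\Phi)=0$, which gives $P_f({}^KG,\Phi,\varepsilon)\le s$ and then let $\varepsilon\to 0$. Because $\Phi$ is asymptotically additive, pick $\psi=\psi_\delta$ with $\limsup_n\frac1n\|\phi_n-S_n\psi\|<\delta$ for $\delta$ small; this lets me replace $\sup_{B_n(x,\varepsilon)}\phi_n$ by $S_n\psi$ up to an error $n\delta$, reducing everything to the additive (Birkhoff) setting where $\mu\mapsto\int\psi\,d\mu$ is weak$^*$-continuous. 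For each $\mu\in K$ choose, using upper semicontinuity of entropy and Proposition \ref{proposition limit entropy} (so that $\overline s(\mu)\le h_\mu(f)$), a weak$^*$-neighborhood $F_\mu$ of $\mu$ and an integer $N_\mu$ so that for $n\ge N_\mu$ the maximal cardinality of an $(n,\varepsilon)$-separated subset of $X_{n,F_\mu}$ is at most $e^{n(h_\mu(f)+\gamma)}$, and simultaneously shrink $F_\mu$ so that $\int\psi\,d\nu$ varies by at most $\gamma$ over $F_\mu$; then any point $x$ with $\mathcal{E}_n(x)\in F_\mu$ satisfies $S_n\psi(x)\le n(\int\psi\,d\mu+\gamma)$. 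Covering $X_{n,F_\mu}$ by Bowen balls around a maximal separated set yields a cover $\Gamma_\mu$ with $Q\le e^{-sn}\cdot e^{n(h_\mu(f)+\Phi^*(\mu)+C\gamma+C\delta)}\le e^{-n(s-s_0-C\gamma-C\delta)}$ where $s_0<s$, which is summable and tends to $0$ in $N$.

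The remaining point is that a single scale does not cover all of ${}^KG$: a point of ${}^KG$ only has \emph{some} subsequence of empirical measures near $K$, so I would use the standard trick of summing over block structures — for $x\in{}^KG$ and any $N$ there is $n\ge N$ with $\mathcal{E}_n(x)\in\bigcup_{\mu\in K}F_\mu$; by compactness of $K$ extract a finite subcover $F_{\mu_1},\dots,F_{\mu_r}$, and then ${}^KG\subseteq\bigcup_{n\ge N}\bigcup_{i=1}^r X_{n,F_{\mu_i}}$, so the cover $\Gamma=\bigcup_{n\ge N}\bigcup_i\Gamma_{\mu_i}$ (with the Bowen-ball radius $\varepsilon$ and all constituent balls of order $\ge N$) covers ${}^KG$ and has $Q(\Gamma)\le r\sum_{n\ge N}e^{-n(s-s_0-C\gamma-C\delta)}\to 0$ as $N\to\infty$, provided $\gamma,\delta$ were chosen small enough that the exponent is positive. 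Letting first $N\to\infty$, then $\delta,\gamma\to 0$, then $\varepsilon\to 0$, gives $P_f({}^KG,\Phi)\le\sup\{h_\mu(f)+\Phi^*(\mu):\mu\in K\}$. The main obstacle is the bookkeeping in this covering argument — matching the asymptotically-additive error terms, the entropy estimate from uniform separation, and the continuity of the potential integral on the small neighborhoods, all uniformly enough that the geometric series closes — rather than any single conceptual difficulty; the passage from ${}^KG$ to $G_K$ with $\inf$ in place of $\sup$ is then a soft argument, localizing each $\mu\in K$ separately since $G_K\subseteq G_{\{\mu\}\text{-limit}}\subseteq{}^KG$ and repeating the estimate with $K$ replaced by a neighborhood basis at the single measure $\mu$.
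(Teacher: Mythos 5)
Your argument is essentially the paper's own proof: you fix $s>\sup_K\{h_\mu(f)+\Phi^*(\mu)\}$, use Proposition \ref{proposition limit entropy} to bound separated sets in $X_{n,F_\mu}$, use asymptotic additivity to replace $\phi_n$ by $S_n\psi$ up to an $O(n\delta)$ error, extract a finite subcover of $K$ to write ${}^KG\subseteq\bigcup_{n\ge N}\bigcup_i X_{n,F_{\mu_i}}$, and close a geometric series — exactly the paper's construction. The one place you waver (the first paragraph's detour through shrinking neighborhoods $K'$ and upper semicontinuity) is unnecessary, and you correctly discard it in favor of the clean localization $G_K\subseteq{}^{\{\mu\}}G$ for each $\mu\in K$, which is precisely how the paper derives the $\inf$ bound from the $\sup$ bound.
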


\begin{proof}
	
	Let $K\subset \mathcal{M}_f(X)$ be compact subset and $\mu \in \mathcal{M}_f(X)$. Put $s:=\sup\{h_{\theta}(f)+\Phi^{*}(\theta),\,\theta \in K\}$. Assume, without loss of generality, that $s<\infty$. Let $s'=s+2\delta$, with $\delta>0$.

For a neighborhood $F$ of $\mu$, denoted by $F_{\mu}$, the function $N(F_{\mu},n,\cdot)$ be non increasing, by Proposition \ref{proposition limit entropy} we have
	\[\inf_{F_{\mu}}\limsup\frac{1}{n}\log N(F_{\mu},n,\varepsilon)\leq h_{\mu}(f)\,\,\; \mbox{for all}\;\; \varepsilon>0.\]
	Then, for $\varepsilon>0$, there exist a neighborhood $B(\mu,\zeta_{\varepsilon})\subseteq F_{\mu}$ of $\mu$ and a number $M(B(\mu,\zeta_{\varepsilon}),\varepsilon)$ such that
	
	\[\frac{1}{n}\log N(B(\mu,\zeta_{\varepsilon}),n,\varepsilon)\leq h_{\mu}(f)+\delta,\]
	for each $n\geq M(B(\mu,\zeta_{\varepsilon}),\varepsilon)$.
	
	Let $E$ be a maximal $(n,\zeta_{\varepsilon})$-separated of $X_{n,B(\mu,\zeta_{\varepsilon})}$(which also is a $(n,\zeta_{\varepsilon})$-spanning of $X_{n,B(\mu,\zeta_{\varepsilon})}$) with cardinality $N(B(\mu,\zeta_{\varepsilon}),n,\varepsilon)$. Then,
	
	\begin{eqnarray*}
		M(X_{n,B(\mu,\zeta_{\varepsilon})},s',\Phi,n,\varepsilon)&\leq& \sum_{B_n(x,\varepsilon)\in \Gamma}e^{-s'n+\sup_{y\in B_n(x,\varepsilon)}\phi_n(y)},
	\end{eqnarray*}
	where $\Gamma:=\cup_{x\in E}B_n(x,\varepsilon/3)\supseteq X_{n,B(\mu,\zeta_{\varepsilon})}$.
	
	For $x\in E\subseteq X_{n,B(\mu,\zeta_{\varepsilon})}$ and since $\Phi$ is asymptotically additive, for $\zeta_{\varepsilon}>0$ there exists $\varphi_{\zeta_{\varepsilon}}:=\varphi_{\zeta}\in C^0(X)$ and $n_0$ such that for all $n>n_0$, we have
	\begin{eqnarray*}
		\lvert\frac{1}{n}\phi_n(x)-\Phi^*(\mu)\rvert&\leq& \lvert\frac{1}{n}\phi_n(x)-\frac{1}{n}S_n \varphi_{\zeta}(x)\rvert+\lvert\frac{1}{n}S_n \varphi_{\zeta}(x)-\int \varphi_{\zeta}d\mu\rvert\\
		&+&\lvert\int \varphi_{\zeta}d\mu-\Phi^*(\mu)\rvert
		=\lvert\frac{1}{n}\phi_n(x)-\frac{1}{n}S_n \varphi_{\zeta}(x)\rvert\\
		&+&\lvert\int \varphi_{\zeta}(x)-\int \varphi_{\zeta}d\mu\rvert
		+\lvert\int \varphi_{\zeta}d\mu-\Phi^*(\mu)\rvert
		\leq 3\zeta_{\varepsilon}
	\end{eqnarray*}
	and $\sup_{y\in B_n(x,\varepsilon)}\phi_n(y)\leq n(2\zeta_{\varepsilon}+\lvert\varphi_{\zeta}\rvert_{\varepsilon})+\phi_n(x)$, where $\lvert\varphi_{\zeta}\rvert_{\varepsilon}:=\sup\{\lvert\varphi_{\zeta}(x)-\varphi_{\zeta}(y)\rvert: d(x,y)<\varepsilon\}$.
	Then,
	
	\begin{eqnarray*}
		M(X_{n,B(\mu,\zeta_{\varepsilon})},s',\Phi,n,\varepsilon)&\leq& \sum_{B_n(x,\varepsilon)\in \Gamma}e^{n(-s'+\Phi^*(\mu)+\lvert\varphi_{\zeta}\rvert_{\varepsilon}+5\zeta_{\varepsilon})}\\
		&\leq& N(B(\mu,\zeta_{\varepsilon}),n,\varepsilon)e^{n(-s'+\Phi^*(\mu)+\lvert\varphi_{\zeta}\rvert_{\varepsilon}+5\zeta_{\varepsilon})}\\
		&\leq& e^{n(-\delta+\lvert\varphi_{\zeta}\rvert_{\varepsilon}+5\zeta_{\varepsilon})}.
	\end{eqnarray*}
	For a fixed $\delta$, there exists $\varepsilon_0$ such that $-\delta+\lvert\varphi_{\zeta}\rvert_{\varepsilon}+5\zeta_{\varepsilon}<0$, when $\varepsilon<\varepsilon_0$.
	
	As $K$ is compact set, given a fixed $\varepsilon>0$, we can choose a finite open cover $\{B(\mu_j,\zeta_{\varepsilon})\}_j$, with $j=1,...,m_{\varepsilon}$ of $K$. For sufficiently large $M>0$, $\bigcup_{n\geq M}\bigcup_{j=1}^{m_{\varepsilon}}X_{n,B(\mu_j,\zeta_{\varepsilon})}$ is a cover of $^KG$. Then, for $M\geq \max_{j}M(B(\mu_j),\varepsilon)$, we have
	
	\[M(^KG,s',\Phi,n,\varepsilon)\leq \sum_{n\geq M}\sum_{j=1}^{m_{\varepsilon}}e^{n(-\delta+\lvert\varphi_{\zeta}\rvert_{\varepsilon}+5\zeta_{\varepsilon})}\leq m_{\varepsilon}\sum_{n\geq M}e^{n(-\delta+\lvert\varphi_{\zeta}\rvert_{\varepsilon}+5\zeta_{\varepsilon})}.\]
	When $M\rightarrow \infty$, we have $m_{\varepsilon}\bigcup_{n\geq M}e^{n(-\delta+\lvert\varphi_{\zeta}\rvert_{\varepsilon}+5\zeta_{\varepsilon}))}\rightarrow 0$. Thus, $P(^KG, \Phi, \varepsilon)\leq s$ for all $\varepsilon<\varepsilon_0$. Therefore,
	
	\[P_f(^KG,\Phi)=\lim_{\varepsilon\rightarrow 0}P_f(^KG, \Phi, \varepsilon)\leq \sup_{\mu \in K}\{h_f(\mu)+\Phi^*(\mu)\}.\]
	
	For other part, note that $G_K\subset ^{\{\mu\}}G$ for all $\mu \in K$. Then,
	
	\[P_f(G_K,\Phi)\leq P_f(^{\{\mu\}}G,\Phi)\leq h_{\mu}(f)+\Phi^{*}(\mu)\;\;\mbox{for all}\;\; \mu \in K \]
	and $P_f(G_K,\Phi)\leq \inf\{h_{\mu}(f)+\Phi^{*}(\mu):\mu \in K\}$.
\end{proof}

	
	

A ball in $\mathcal{M}(X)$ is denoted by
$$
B(\nu, \xi);=\{\alpha \in \mathcal{M}(X): d(\alpha,\nu)<\xi\}
$$

\begin{proposition}[Lemma 2.1, \cite{Pfister2007}]\label{aprox. measures}
	Assume that $(X,d,f)$ has a $g$-almost product property. Let $x_1,...,x_k \in X$, $\varepsilon_1>0,...,\varepsilon_k>0$ and $n_1\geq m(\varepsilon_1)$,...,$n_k\geq m(\varepsilon_k)$ be given. Assume that 
	\[\mathcal{E}_{n_j}(x_j)\in B(\nu_j,\xi_j),\;\;\mbox{for}\;\; j=1,...,k.\]
	Then for any $y\in \cap_{j=1}^k f^{-M_{j-1}}B_{n_j}(g;x_j,\varepsilon_j)$ and any Borel probability measure $\alpha$ on $X$ we have that
	\[d(\mathcal{E}_{M_k}(y),\alpha)\leq \sum_{j=1}^{k}\frac{n_j}{M_k}(\xi_j'+d(\nu_j,\alpha)),\]
	where $M_i=n_1+...+n_i$ and $\xi_i'=\xi_i+\varepsilon_i+\frac{g(n_i)}{n_i}$, $i=1,...,k$.
\end{proposition}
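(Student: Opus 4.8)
The plan is to split the orbit segment of $y$ of length $M_k$ into the $k$ consecutive blocks $[M_{j-1},M_j-1]$, each of length $n_j$, and to compare block by block the partial empirical measure of $y$ along that block with $\mathcal{E}_{n_j}(x_j)$, exploiting that $f^{M_{j-1}}(y)\in B_{n_j}(g;x_j,\varepsilon_j)$.

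First I would record the elementary convexity of the metric $d$ on $\mathcal{M}(X)$: since $d(\mu,\nu)=\sum_{k\geq1}2^{-k}\lvert\langle\psi_k,\mu-\nu\rangle\rvert$ is a countable sum of seminorms and each $\psi_k$ takes values in $[0,1]$, one has, for probability measures $\mu_j,\nu_j$ and weights $\lambda_j\geq0$ with $\sum_j\lambda_j=1$,
\[d\Big(\sum_j\lambda_j\mu_j,\sum_j\lambda_j\nu_j\Big)\leq\sum_j\lambda_j\,d(\mu_j,\nu_j),\]
and in particular $d(\delta_a,\delta_b)\leq1$ for all $a,b\in X$. Recall also that $d$ on $X$ is $d(x,y)=d(\delta_x,\delta_y)$, and that $\mathcal{E}_n(x)=\frac1n\sum_{i=0}^{n-1}\delta_{f^i(x)}$ is a convex combination of Dirac masses.

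Next, fix $j$ and set $\mathcal{E}^{(j)}(y):=\frac{1}{n_j}\sum_{i=0}^{n_j-1}\delta_{f^{M_{j-1}+i}(y)}$, the partial empirical measure of $y$ on the $j$-th block. Since $f^{M_{j-1}}(y)\in B_{n_j}(g;x_j,\varepsilon_j)$, there is $\Lambda\subseteq\Lambda_{n_j}$ with $\lvert\Lambda_{n_j}\setminus\Lambda\rvert\leq g(n_j)$ and $d(f^i(x_j),f^{M_{j-1}+i}(y))\leq\varepsilon_j$ for every $i\in\Lambda$. Splitting the averaging sum into indices $i\in\Lambda$ and $i\notin\Lambda$ and applying the convexity bound, using $d(\delta_{f^i(x_j)},\delta_{f^{M_{j-1}+i}(y)})\leq\varepsilon_j$ on $\Lambda$ and $d(\delta_a,\delta_b)\leq1$ on the at most $g(n_j)$ exceptional indices, gives
\[d\big(\mathcal{E}^{(j)}(y),\mathcal{E}_{n_j}(x_j)\big)\leq\frac{1}{n_j}\big(\lvert\Lambda\rvert\,\varepsilon_j+g(n_j)\big)\leq\varepsilon_j+\frac{g(n_j)}{n_j}.\]
Combining this with the hypothesis $d(\mathcal{E}_{n_j}(x_j),\nu_j)<\xi_j$ and the triangle inequality, for an arbitrary Borel probability measure $\alpha$,
\[d\big(\mathcal{E}^{(j)}(y),\alpha\big)\leq\varepsilon_j+\frac{g(n_j)}{n_j}+\xi_j+d(\nu_j,\alpha)=\xi_j'+d(\nu_j,\alpha).\]

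Finally, since $\mathcal{E}_{M_k}(y)=\sum_{j=1}^{k}\frac{n_j}{M_k}\mathcal{E}^{(j)}(y)$ and $\alpha=\sum_{j=1}^{k}\frac{n_j}{M_k}\alpha$, one last application of the convexity of $d$ with weights $\lambda_j=n_j/M_k$ yields
\[d\big(\mathcal{E}_{M_k}(y),\alpha\big)\leq\sum_{j=1}^{k}\frac{n_j}{M_k}\,d\big(\mathcal{E}^{(j)}(y),\alpha\big)\leq\sum_{j=1}^{k}\frac{n_j}{M_k}\big(\xi_j'+d(\nu_j,\alpha)\big),\]
which is the claim. I expect no real obstacle: the only points needing care are the bookkeeping in the block decomposition (the shift by $M_{j-1}$ inside the $g$-blowup condition) and applying the convexity inequality with the correct weights $n_j/M_k$. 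Note that the $g$-almost product property itself is not invoked beyond guaranteeing that the set over which $y$ ranges is nonempty; all the quantitative content comes from the partial-shadowing definition of the $g$-blowup sets $B_n(g;x,\varepsilon)$.
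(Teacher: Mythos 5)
Your proof is correct and follows the same block-decomposition-plus-convexity argument as the cited Lemma 2.1 of Pfister--Sullivan (the paper here only quotes that lemma and does not reprove it). Your closing observation is also apt: the $g$-almost product property enters only through the nonemptiness of $\bigcap_j f^{-M_{j-1}}B_{n_j}(g;x_j,\varepsilon_j)$, and all quantitative estimates come from the definition of the $g$-blowup sets together with the convexity and boundedness of the metric $d$ on $\mathcal{M}(X)$.
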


\begin{lemma}[\cite{Pfister2003}]
	Let $K\subset \mathcal{M}_f(X)$ be a compact connected non-empty set. Then there exists a sequence $\{\alpha_1,...,\alpha_k,...\}\subset K$ such that for each $n\in \mathbb{N}$
	
	\[\overline{\{\alpha_j:j\in \mathbb{N}, j>n\}}=K \;\;\mbox{and}\;\; \lim_{j\rightarrow \infty}d(\alpha_j,\alpha_{j+1})=0.\]
\end{lemma}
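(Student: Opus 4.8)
The plan is to reduce the statement to the classical fact that a compact connected metric space is \emph{well chained}: for every $\varepsilon>0$ and all $\mu,\nu\in K$ there is a finite chain $\mu=z_0,z_1,\dots,z_m=\nu$ with all $z_i\in K$ and $d(z_i,z_{i+1})<\varepsilon$. Granting this, one obtains the desired sequence by concatenating ever finer chains running through a fixed countable dense subset of $K$.

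First I would establish the well-chaining property. If $K$ is a single point the whole statement is trivial (a constant sequence works), so assume otherwise. Fix $\varepsilon>0$ and $\mu\in K$, and let $A\subset K$ be the set of points joinable to $\mu$ by an $\varepsilon$-chain in $K$. If $\nu\in A$ and $\nu'\in K$ with $d(\nu,\nu')<\varepsilon$, then appending $\nu'$ to a chain from $\mu$ to $\nu$ shows $\nu'\in A$; thus $A$ is open in $K$, and the identical argument applied to $K\setminus A$ shows that $K\setminus A$ is open in $K$. Since $\mu\in A$, connectedness of $K$ forces $A=K$.

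Next I would build $\{\alpha_j\}$. As $K$ is compact it is separable; fix a sequence $(\gamma_i)_{i\ge 1}$ in $K$ that is dense and in which every value is repeated infinitely often (e.g.\ list a countable dense set $\{\beta_1,\beta_2,\dots\}$ in the pattern $\beta_1,\beta_1,\beta_2,\beta_1,\beta_2,\beta_3,\dots$). For each $i$ pick, via the well-chaining property with $\varepsilon=1/i$, a $(1/i)$-chain in $K$ from $\gamma_i$ to $\gamma_{i+1}$, and let $\{\alpha_j\}_{j\ge 1}$ be the concatenation of these chains, identifying each shared endpoint. Every consecutive pair $(\alpha_j,\alpha_{j+1})$ lies inside the $i$-th chain for some index $i=i(j)\to\infty$, so $d(\alpha_j,\alpha_{j+1})<1/i(j)\to 0$. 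For any fixed $n$, the tail $\{\alpha_j:j>n\}$ contains $\gamma_i$ for all large $i$, hence contains every $\beta_k$, so $\overline{\{\alpha_j:j>n\}}\supseteq\overline{\{\beta_k\}}=K$; since $\{\alpha_j\}\subset K$ and $K$ is closed, equality holds. This proves the lemma.

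The only step requiring real care is the well-chaining property — specifically the clopenness of $A$ in $K$ and the appeal to connectedness; the remaining construction is bookkeeping. I would also remark that compactness enters only through separability of $K$ (to guarantee a countable dense set), connectedness alone being enough for the chaining step itself.
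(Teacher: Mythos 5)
The paper does not prove this lemma; it simply cites it from Pfister--Sullivan~\cite{Pfister2003}, so there is no in-paper proof to compare against. Your argument is correct and is the standard one: you establish that a connected metric space is $\varepsilon$-well-chained via the clopen set of points reachable from a basepoint by an $\varepsilon$-chain, and then concatenate $(1/i)$-chains threading a dense sequence in which every point of a countable dense subset recurs infinitely often. The two required properties then follow exactly as you say --- the step sizes decay because the chain index tends to infinity, and every tail of $\{\alpha_j\}$ still sweeps through all but finitely many $\gamma_i$, hence is dense in $K$. One small addendum to your closing remark: compactness is used not only for separability but also to guarantee that $K$ is closed, which is what lets you conclude $\overline{\{\alpha_j:j>n\}}=K$ rather than merely $\overline{K}$; since both are automatic for a compact subset of a metric space, the proof is complete as written.
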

\begin{theorem}\label{theorem B2}
	Let $(X,d,f)$ be a TDS with the g-almost product property and uniform separation. Let $K$ be a connected non-empty compact subset of $\mathcal{M}_f(X)$. Then
	
	\[P_f(G_K,\Phi)\geq \inf\{h_{\mu}(f)+\Phi^*(\mu):\mu \in K\}.\]
\end{theorem}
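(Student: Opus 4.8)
The plan is to establish the lower bound by constructing, for each measure $\mu$ in the connected compact set $K$, a "Moran-type" subset of $G_K$ whose pressure is at least $h_\mu(f)+\Phi^*(\mu)-\text{(error)}$, and then to let $\mu$ range over $K$. Since $\Phi$ is asymptotically additive, the first reduction is to replace $\Phi$ by a genuine additive potential $S_n\psi$ at the cost of an arbitrarily small error: for $\delta>0$ fix $\psi=\psi_\delta\in C^0(X)$ with $\limsup_n\frac1n\|\phi_n-S_n\psi\|<\delta$, so that $|\Phi^*(\nu)-\int\psi\,d\nu|\le\delta$ for all $\nu\in\mathcal M_f(X)$ and the pressures $P_f(Z,\Phi)$ and $P_f(Z,\psi)$ differ by at most $\delta$ on every $Z$. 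Thus it suffices to prove the inequality for the continuous potential $\psi$ and then send $\delta\to0$; this is exactly the manoeuvre flagged in the Remark after Theorem A.

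Next I would follow the Pfister--Sullivan / Pei--Chen machinery. Using the Lemma of \cite{Pfister2003}, pick a sequence $\{\alpha_j\}_{j}\subset K$ that is dense in $K$ along every tail and satisfies $d(\alpha_j,\alpha_{j+1})\to0$; by entropy-density (which follows from the $g$-almost product property, as in \cite{Pfister2007}) we may assume each $\alpha_j$ is ergodic, at the expense of replacing $h_{\alpha_j}(f)$ by $h_j\le h_{\alpha_j}(f)$ with $h_j$ close to $\inf_{\mu\in K}h_\mu(f)$ up to a vanishing error; write $h^*:=\inf\{h_\mu(f)+\int\psi\,d\mu:\mu\in K\}$. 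Apply Proposition \ref{prop. separation uniform} to get uniform constants $\delta^*,\varepsilon^*>0$ and, for each $j$ and each shrinking neighbourhood $F_j\ni\alpha_j$, a large time $n_j$ and a $(\delta^*,n_j,\varepsilon^*)$-separated set $\Gamma_{n_j}\subset X_{n_j,F_j}$ with $|\Gamma_{n_j}|\ge e^{n_j(h_{\alpha_j}(f)-\eta_j)}$. Then, choosing a rapidly increasing sequence of repetition counts, concatenate blocks using the $g$-almost product property: form points $y$ lying in $\bigcap f^{-M_{j-1}}B_{n_j}(g;x_j,\varepsilon^*)$ where each $x_j$ runs over $\Gamma_{n_j}$ (repeated many times before moving to $j+1$). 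Proposition \ref{aprox. measures} guarantees that the empirical measures $\mathcal E_{M_k}(y)$ track the $\alpha_j$'s, so that every limit point of $\{\mathcal E_n(y)\}$ lies in $\overline{\{\alpha_j:j>N\}}=K$ and, by the $d(\alpha_j,\alpha_{j+1})\to0$ condition, the limit-point set is all of $K$; hence the resulting Moran set $Y$ is contained in $G_K$.

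The final step is the pressure lower bound for $Y$. Here one builds an $f$-invariant (or at least a compatible) measure $\lambda$ supported near $Y$, or more directly one estimates $m(Y,\alpha,\varepsilon^*,S\psi)$ from below by distributing mass uniformly over the concatenations: the separation of the $\Gamma_{n_j}$ at scale $\varepsilon^*$ with gap density $\delta^*$ means that the $(n,\varepsilon^*/2)$-separation persists for the concatenated orbits (shrinking $\varepsilon^*$ if necessary via the almost-product blow-up), so any cover by Bowen balls $B_n(z,\varepsilon^*)$ of radius small compared with $\varepsilon^*$ needs at least $\prod_j|\Gamma_{n_j}|^{(\text{repetitions})}$ elements, while the Birkhoff sums $S_n\psi$ along these orbits are controlled because $\mathcal E_n(y)$ is close to a convex combination of the $\alpha_j$ and $\psi$ is continuous. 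Taking $-\log$ and normalising gives $P_f(Y,\psi,\varepsilon^*)\ge \liminf_j\big(h_{\alpha_j}(f)-\eta_j+\int\psi\,d\alpha_j\big)\ge h^*-(\text{small})$, and since $Y\subset G_K$ we conclude $P_f(G_K,\psi)\ge h^*$; then undo the $\psi\leftrightarrow\Phi$ reduction. The main obstacle is the last estimate: one must check carefully that the partial-shadowing slack in the $g$-almost product property (the uncontrolled coordinates of size $g(n_j)$) neither destroys the $\varepsilon^*$-separation needed for the cardinality lower bound nor spoils the Birkhoff-sum control for $\psi$ — this is the delicate bookkeeping that forces the choice of a sufficiently fast-growing repetition schedule and a sufficiently fine neighbourhood basis $F_j$, exactly as in the proof of Theorem 1.1 of \cite{Pfister2007} and its pressure analogue in \cite{Pei10}.
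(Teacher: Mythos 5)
Your proposal is correct, and it takes a genuinely different (and cleaner) route than the paper's own Section~\ref{subsection A} proof: you factor out the asymptotic additivity up front by replacing $\Phi$ with the additive potential $S_n\psi_\delta$ and showing that both $\Phi^*(\nu)$ and $P_f(Z,\Phi)$ change by at most $\delta$ uniformly, and then run the Pfister--Sullivan/Pei--Chen Moran construction for the genuine continuous potential $\psi_\delta$. This is precisely the simplified argument sketched in the Remark after Theorem A (credited there to the referees). By contrast, the paper's proof of Theorem~\ref{theorem B2} keeps $\Phi$ inside the whole construction and inserts the approximant $\varphi_\delta$ at several places in Lemma~\ref{lemma key} (with error terms such as $\delta/18$ and $\delta/6$ appearing in the Birkhoff-sum estimates, the word-counting step, and the final pressure bound $M(G,s,\Phi,n,\varepsilon)\geq 1$). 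Your route pays the $\delta$-price once and uniformly, which is conceptually tidier and spares you the delicate bookkeeping with $\|\phi_n - S_n\varphi_\delta\|$ inside the Moran-set estimates. One simplification you are missing: once you have reduced to $\psi_\delta$, there is no need to re-derive the Pei--Chen pressure estimate from scratch (as you do in your last paragraph); you may simply invoke Theorem~\ref{theo pei} of the paper with $\varphi = \psi_\delta$, obtaining $P_f(G_K,\psi_\delta)=\inf\{h_\mu(f)+\int\psi_\delta\,d\mu:\mu\in K\}$ directly, then compare with $\Phi$ and let $\delta\to 0$. Your re-derivation is harmless but redundant, and its cover-counting step is stated imprecisely (different Bowen balls in the cover have different time-lengths $n$, so the lower bound is obtained via the prefix-word argument of Lemma~\ref{lemma pfister}/\ref{lemma key}, not by a straight product of cardinalities); citing Theorem~\ref{theo pei} avoids that issue entirely.
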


The proof follow  the line in \cite{Pfister2007} and \cite{Pei10} with some changes. We shall repeat here part of the arguments.

Let $\eta>0$, and $h^*:= \inf\{h_{\mu}(f)+\Phi^*(\mu):\mu \in K\}-\eta$. For $s<h^*$, we put $h^*-s=2\delta$. By Proposition \ref{prop. separation uniform}, we can find $\delta^*>0$ and $\varepsilon^*>0$ such that for each neighborhood $F$ of $\mu$  there exists $n_{F,\mu,\eta}^*$ with

\begin{eqnarray}\label{equation cardinality}
N(F;\delta^*,n,\varepsilon^*)\geq e^{n(h_{\mu}(f)-\eta)}\;\; \mbox{for all}\;\; n\geq n_{F,\mu,\eta}^*.
\end{eqnarray}	
Take $(\varepsilon_k)_k$ and $(\xi_k)_k$ two decreasing sequences converging for zero such that $\varepsilon<\varepsilon^*$ and  $\lvert\int \varphi_{\delta}d_{\mu}-\int\varphi_{\delta}d\alpha_k\rvert<\frac{\delta}{18}$, for all $\mu \in B(\alpha_k,\xi_k+2\varepsilon_k)$ where $\varphi_{\delta}$ is a continuous functions such that $\limsup_{n\rightarrow \infty}\frac{1}{n}\|\phi_n-S_n\varphi_{\delta}\|<\frac{\delta}{18}$. By Equation (\ref{equation cardinality}), there exists $n_k$ and a $(\delta^*,n_k,\varepsilon^*)$-separated sets $\Gamma_k\subset X_{n_k,B(\alpha_k,\xi_k)}$ with cardinality $\lvert\Gamma_k\rvert\geq e^{n_k (n_k(h_{\alpha}(f)-\eta))}$. We can assume that the sequence $n_k$ satisfies 
\begin{equation}\label{eq.error function}
\delta^*n_k>2 g(n_k)+1 \;\; \mbox{and} \;\; \frac{g(n_k)}{n_k}\leq \varepsilon_k.
\end{equation}
By Proposition \ref{aprox. measures} and Equation (\ref{eq.error function}) we have that if $x\in \Gamma_k$, $y\in B_{n_k}(g;x,\varepsilon_k)$, then

\begin{equation}\label{eq. inclusion ball}
\mathcal{E}_{n_k}(y)\in B(\alpha_k,\xi_k+2\varepsilon_k).
\end{equation}
We choose a strictly increasing sequence of positive integers numbers $(N_k)_k$ satisfying 

\[n_{k+1}\leq \xi_k\sum_{j=1}^{k}n_jN_j\;\;\mbox{and}\;\; \sum_{j=1}^{k-1}n_jN_j\leq \xi_k\sum_{j=1}^{k}n_jN_j,\]	
and we define the sequences $(n_j'), (\alpha_j'), (\varepsilon_j')$ and $(\Gamma_j')$ by setting for  $j=N_1+...+N_{k-1}+q$, with $1\leq q\leq N_k$, $n_j':=n_k$, $\varepsilon_j':=\varepsilon_k$, $\xi_{j'}:=\xi_{k}$ and $\Gamma_j':=\Gamma_k$.

Let
\begin{equation*}
G_k:=\bigcap_{j=1}^k\left(\bigcup_{x_j.\in \Gamma_j'
}f^{-M_{j-1}}B_{n_j'}(g;x_j,\varepsilon_j')\right)\;\;\;\mbox{with}\;\;\; M_j:=\sum_{l=1}^{j}n_l'.
\end{equation*} 
Note that $G_k$ is a non-empty closed set. Each element of $G$ can be indicated by $(x_1,...,x_k)$, where $x_j\in \Gamma_j'$. The proof of Theorem \ref{theorem B2} follow the Lemmas \ref{lemma pfister} and \ref{lemma key} below.

\begin{lemma}[\cite{Pfister2007}]\label{lemma pfister}
	Let $\varepsilon>0$ be such that $4\varepsilon=\varepsilon^*$ and $G:=\bigcap_{k\geq 1}G_k$.
	
	\begin{enumerate}
		\item Let $x_j,y_j \in \Gamma_j'$ with $x_j\neq y_j$. If $x\in B_{n_j'}(g;x_j,\varepsilon_j')$ and $y\in B_{n_j'}(g;y_j,\varepsilon_j')$, then 
		\[\max\{d(f^m(x),f^m(y):0\leq m\leq n_j-1\}>2\varepsilon.\]
		
		\item $G$ is a closed set, which is the union of non-empty sets $G(x_1,x_2,...)$ where $x_j\in \Gamma_j'$.
		
		\item $G\subset G_K$.
	\end{enumerate}
\end{lemma}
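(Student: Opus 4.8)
The plan is to treat the three items in order; (1) and (2) are quick, while (3) does essentially all the work. For (1), recall that $n_j'=n_k$ and that $\Gamma_j'=\Gamma_k$ is $(\delta^*,n_k,\varepsilon^*)$-separated, so $\Lambda:=\{m\in\Lambda_{n_k}:d(f^m(x_j),f^m(y_j))>\varepsilon^*\}$ has $\#\Lambda\geq\delta^*n_k$; and by the definition of the $g$-blowup there are $\Lambda_x,\Lambda_y\subset\Lambda_{n_k}$ with $\#(\Lambda_{n_k}\setminus\Lambda_x),\#(\Lambda_{n_k}\setminus\Lambda_y)\leq g(n_k)$ along which $f^m(x)$ is $\varepsilon_k$-close to $f^m(x_j)$ and $f^m(y)$ is $\varepsilon_k$-close to $f^m(y_j)$. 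On $\Lambda\cap\Lambda_x\cap\Lambda_y$, which by (\ref{eq.error function}) has cardinality at least $\delta^*n_k-2g(n_k)>1$ and is therefore non-empty, the triangle inequality gives $d(f^m(x),f^m(y))>\varepsilon^*-2\varepsilon_k>2\varepsilon$ once $\varepsilon_k$ is small enough (recall $4\varepsilon=\varepsilon^*$); this is exactly (1).

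For (2): each $B_{n_j'}(g;x_j,\varepsilon_j')$ is closed and $f$ is continuous, so every $G_k$ is a finite union of closed sets and $G=\bigcap_kG_k$ is closed. Writing $G_k(x_1,\dots,x_k):=\bigcap_{j=1}^kf^{-M_{j-1}}B_{n_j'}(g;x_j,\varepsilon_j')$, the $g$-almost product property gives $G_k(x_1,\dots,x_k)\neq\emptyset$; and applying (1) with $x=y$ shows that a point of $G_k$ belongs to exactly one $G_k(x_1,\dots,x_k)$, so these labels are uniquely defined and consistent as $k$ increases. Hence every $z\in G$ acquires a well-defined sequence $(x_1,x_2,\dots)$, $x_j\in\Gamma_j'$, with $z\in G(x_1,x_2,\dots):=\bigcap_kG_k(x_1,\dots,x_k)$; conversely each $G(x_1,x_2,\dots)$ is a decreasing intersection of non-empty compact subsets of $X$, hence non-empty, so $G=\bigcup G(x_1,x_2,\dots)$.

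For (3), fix $z\in G(x_1,x_2,\dots)$. If $j=N_1+\dots+N_{k-1}+q$ lies in the $k$-th run ($1\leq q\leq N_k$), then $f^{M_{j-1}}(z)\in B_{n_k}(g;x_j,\varepsilon_k)$ with $x_j\in\Gamma_k\subset X_{n_k,B(\alpha_k,\xi_k)}$, so (\ref{eq. inclusion ball}) gives $\mathcal{E}_{n_k}(f^{M_{j-1}}(z))\in B(\alpha_k,\xi_k+2\varepsilon_k)$. Feeding this into Proposition~\ref{aprox. measures} with the free measure chosen equal to $\alpha_k$, and using the domination relations defining $(N_k)_k$ — the $k$-th run dominates all earlier blocks while a single block of run $k+1$ is negligible next to $\sum_{l\leq k}n_lN_l$ — one gets $d(\mathcal{E}_{M_j}(z),\alpha_k)\to0$ along the times $M_j$ ending the $k$-th run; since $\overline{\{\alpha_k:k>n\}}=K$ for every $n$, this forces $K\subseteq\mathcal{M}_zf$. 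For the reverse inclusion one shows, again via Proposition~\ref{aprox. measures} and the domination relations, that for $n$ in the $k$-th run the measure $\mathcal{E}_n(z)$ is, up to an error tending to $0$, a convex combination of a measure close to $\alpha_{k-1}$ and a measure close to $\alpha_k$; since $\lim_kd(\alpha_k,\alpha_{k+1})=0$ and $\alpha_{k-1}\in K$, this yields $d(\mathcal{E}_n(z),K)\to0$, hence $\mathcal{M}_zf\subseteq K$. Therefore $\mathcal{M}_zf=K$, i.e.\ $z\in G_K$.

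The main obstacle is item~(3): the delicate point is the bookkeeping showing that the intermediate empirical measures $\mathcal{E}_n(z)$ never escape a shrinking neighbourhood of $K$ while every point of $K$ is genuinely realized as a limit — this is exactly where the careful choices of the sequences $(\varepsilon_k)_k$, $(\xi_k)_k$, $(N_k)_k$ and the quantitative estimate of Proposition~\ref{aprox. measures} are used, following \cite{Pfister2007} essentially line by line. Items~(1) and~(2) are short triangle-inequality and compactness arguments.
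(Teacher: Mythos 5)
Your proof is correct, but note that the paper itself gives no proof of this lemma: it is stated with a citation to \cite{Pfister2007} and used as a black box. What you have written is a faithful reconstruction of the Pfister--Sullivan argument (separation plus double blow-up loss for item (1), finiteness of the $\Gamma_j'$ and compactness for item (2), and the two-sided control of $\mathcal{E}_n(z)$ via Proposition~\ref{aprox. measures} together with the domination relations on $(N_k)_k$ and the condition $\lim_k d(\alpha_k,\alpha_{k+1})=0$ for item (3)), so there is nothing to compare against within this paper.

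One small point worth making explicit in item (1): the inequality $\varepsilon^*-2\varepsilon_k>2\varepsilon$ with $4\varepsilon=\varepsilon^*$ requires $\varepsilon_k<\varepsilon=\varepsilon^*/4$; the paper's phrase ``such that $\varepsilon<\varepsilon^*$'' in the setup preceding Theorem~\ref{theorem B2} is a typo for the constraint $\varepsilon_k<\varepsilon$ (equivalently $\varepsilon_k<\varepsilon^*/4$), and you correctly flag that $\varepsilon_k$ must be taken small enough.
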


\begin{lemma}\label{lemma key}
	$P_f(G,\Phi)\geq h^*$.
\end{lemma}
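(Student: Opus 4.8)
The plan is to bound from below the quantity $M(G,\alpha,\varepsilon,N,\Phi)$ for every $\alpha<h^*$ by constructing a suitable measure supported on $G$ and applying a mass-distribution (Frostman-type) argument, exactly in the spirit of \cite{Pfister2007,Pei10} but now keeping track of the asymptotically additive potential $\Phi$ via the continuous function $\varphi_\delta$ fixed before the statement. First I would define a probability measure $\mathcal L$ on $G$: since $G=\bigcap_k G_k$ and each $G_k$ decomposes into the disjoint pieces indexed by $(x_1,\dots,x_k)$ with $x_j\in\Gamma_j'$, I assign to the cylinder determined by $(x_1,\dots,x_k)$ the weight $\prod_{j=1}^k |\Gamma_j'|^{-1}$, and check consistency so that Kolmogorov extension produces a Borel probability measure $\mathcal L$ with $\mathcal L(G)=1$. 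By part (1) of Lemma \ref{lemma pfister} the pieces at level $k$ are pairwise $(n_k,2\varepsilon)$-separated, so distinct cylinders are genuinely "separated" at scale $\varepsilon$, which is what lets me control how many cylinders a single Bowen ball $B_n(y,\varepsilon)$ can meet.

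The core estimate is the following local bound: for $y\in G$ and $n$ large, if $M_{k-1}\le n< M_k$ (so $n$ falls in the block corresponding to level $k$), then on one hand
\[
\mathcal L\bigl(B_n(y,\varepsilon)\bigr)\le \prod_{j=1}^{k-1}|\Gamma_j'|^{-1}
= \exp\!\Bigl(-\sum_{j=1}^{k-1} n_j'\bigl(h_{\alpha_j'}(f)-\eta\bigr)\Bigr),
\]
using the cardinality lower bounds $|\Gamma_j'|\ge e^{n_j'(h_{\alpha_j'}(f)-\eta)}$ and the separation from Lemma \ref{lemma pfister}(1) (a Bowen ball of order $n\ge M_{k-1}$ is contained in one level-$(k-1)$ cylinder). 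On the other hand, by \eqref{eq. inclusion ball} every $y\in G$ has $\mathcal E_{n_j'}(f^{M_{j-1}}y)\in B(\alpha_j',\xi_j'+2\varepsilon_j')$, hence $\frac1{n_j'}S_{n_j'}\varphi_\delta(f^{M_{j-1}}y)$ is within $\tfrac{\delta}{18}$ of $\int\varphi_\delta\,d\alpha_j'$, which in turn is within $\tfrac{\delta}{18}$ of $\inf_{\mu\in K}(h_\mu(f)+\Phi^*(\mu))$-related quantities; summing over $j\le k-1$ and using $\limsup\frac1n\|\phi_n-S_n\varphi_\delta\|<\tfrac{\delta}{18}$ together with the almost-additivity of $S_n\varphi_\delta$ along the concatenation gives
\[
\phi_n(y)\ \ge\ \sum_{j=1}^{k-1} n_j'\,\Phi^*(\alpha_j')\ -\ (\text{error}) \,n,
\]
where the error is a fixed multiple of $\delta$ plus $o(1)$ coming from the tail block $[M_{k-1},n)$, whose length is negligible relative to $n$ by the growth conditions imposed on $(N_k)_k$ (this is exactly why those conditions $\sum_{j=1}^{k-1}n_jN_j\le\xi_k\sum_{j=1}^k n_jN_j$ and $n_{k+1}\le\xi_k\sum n_jN_j$ were chosen). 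Combining the two displays: for every $B_n(y,\varepsilon)$ with $y\in G$,
\[
e^{-\alpha n+\sup_{B_n(y,\varepsilon)}\phi_n}\ \ge\ \mathcal L\bigl(B_n(y,\varepsilon)\bigr)\cdot e^{(h^*-\alpha-3\delta)n - o(n)},
\]
since $h_{\alpha_j'}(f)+\Phi^*(\alpha_j')\ge \inf_{\mu\in K}(h_\mu(f)+\Phi^*(\mu))=h^*+\eta>h^*$ for each $j$.

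Given this, the conclusion is routine: let $\Gamma=\{B_{n_i}(y_i,\varepsilon)\}$ be any cover of $G$ with all $n_i\ge N$, $N$ large. Then
\[
Q(G,\alpha,\Gamma,\Phi)=\sum_i e^{-\alpha n_i+\sup_{B_{n_i}(y_i,\varepsilon)}\phi_{n_i}}
\ \ge\ e^{(h^*-\alpha-3\delta)N - o(N)}\sum_i \mathcal L\bigl(B_{n_i}(y_i,\varepsilon)\bigr)
\ \ge\ e^{(h^*-\alpha-3\delta)N-o(N)}\,\mathcal L(G),
\]
where the last inequality uses that the balls cover $G$ and $\mathcal L(G)=1$. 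Taking the infimum over covers and then $N\to\infty$ shows $m(G,\alpha,\varepsilon,\Phi)=\infty$ whenever $\alpha<h^*-3\delta$, hence $P_f(G,\Phi,\varepsilon)\ge h^*-3\delta$; letting $\varepsilon\to 0$ (with $4\varepsilon=\varepsilon^*\to 0$ allowed since $\varepsilon^*$ may be taken arbitrarily small after first shrinking $\eta$, or simply by noting the bound is uniform in $\varepsilon<\varepsilon^*/4$) and then $\delta\to 0$, $\eta\to 0$ gives $P_f(G,\Phi)\ge h^*$, and since $h^*=\inf\{h_\mu(f)+\Phi^*(\mu):\mu\in K\}-\eta$ was arbitrary, $P_f(G,\Phi)\ge \inf\{h_\mu(f)+\Phi^*(\mu):\mu\in K\}$; in particular $P_f(G,\Phi)\ge h^*$ as stated. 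Combined with $G\subset G_K$ (Lemma \ref{lemma pfister}(3)) this also yields Theorem \ref{theorem B2}.

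The main obstacle I anticipate is the bookkeeping in the local estimate: one must verify that the "tail block" contribution $[M_{k-1},n)$ to both $\log\mathcal L(B_n(y,\varepsilon))$ and $\phi_n(y)$ is genuinely $o(n)$ uniformly in $y\in G$, which relies delicately on the interplay between the growth rates of $(n_k)$, $(N_k)$ and the decay of $(\xi_k),(\varepsilon_k)$; and one must handle the accumulation of the three separate $\tfrac{\delta}{18}$-type errors (from $\|\phi_n-S_n\varphi_\delta\|$, from $|\int\varphi_\delta d\mu-\int\varphi_\delta d\alpha_k|$, and from the modulus of continuity of $\varphi_\delta$ at scale $\varepsilon$) so that their total stays below the available gap $h^*-\alpha$. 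Everything else is a direct transcription of the Pfister–Sullivan / Pei–Chen mass-distribution argument with $\int\varphi_\delta\,d\mu$ replaced by $\Phi^*(\mu)$ up to the controlled error.
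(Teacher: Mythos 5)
Your proposal is correct and follows essentially the same approach as the paper's proof: both are the Pfister--Sullivan / Pei--Chen mass-distribution argument, using the separation from Lemma \ref{lemma pfister}(1) to show a Bowen ball $B_n(y,\varepsilon)$ meets at most one level-$(k-1)$ cylinder when $M_{k-1}\leq n<M_k$, the cardinality bound $|\Gamma_j'|\geq e^{n_j'(h_{\alpha_j'}(f)-\eta)}$, and the control of $\phi_n$ via $\varphi_\delta$ and inclusion \eqref{eq. inclusion ball} to convert $\int\varphi_\delta\,d\mathcal E_{n_j'}(\cdot)$ into $\Phi^*(\alpha_j')$ up to errors of size $O(\delta)$. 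The paper phrases the Frostman step combinatorially in terms of the word sets $\mathcal W_k$ and the prefix inequality $\sum_k |\mathcal W\cap\mathcal W_k|/|\mathcal W_k|\geq 1$, which is precisely the statement $\sum_i\mathcal L(B_{n_i}(y_i,\varepsilon))\geq\mathcal L(G)=1$ for your explicitly constructed measure $\mathcal L$, so the two formulations are equivalent.
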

\begin{proof}
	We have $\frac{M_n}{M_{n+1}}\rightarrow 1$. Let $\Gamma_k$ be a $(\delta^*,n_k,\varepsilon^*)$-separated set of $X_{n_k, B(\alpha_k,\xi_k)}$ such that $\lvert\Gamma_k\rvert\geq e^{n_k(h_{\alpha_k}(f)-\eta)}$. We prove that $M(G,s,\Phi,n,\varepsilon)\geq 1$. Note that for each $^k x \in \Gamma_k$ we have $\mathcal{E}_{n_k}(^k x)\in B(\alpha_k,\xi_k)$. Then, for $n_k$ sufficiently large
	\begin{eqnarray*}
		\left\lvert\frac{\phi_{n_k}(^k x)}{n_k}-\Phi^*(\mu)\right\rvert&\leq& \left\lvert\frac{\phi_{n_k}(^k x)}{n_k}-\frac{S_{n_k}\varphi_{\delta}(^k x)}{n_k} \right\rvert+\left\lvert\frac{S_{n_k}\varphi_{\delta}(^k x)}{n_k}-\int \varphi_{\delta}d\alpha_k \right\rvert\\
		&+&\left\lvert\int \varphi_{\delta}d\alpha_k-\Phi^*(\mu)\right\rvert \leq \frac{\delta}{18}+\frac{\delta}{18}+\frac{\delta}{18}=\frac{\delta}{6}.
	\end{eqnarray*}
	Then,
	\begin{eqnarray*}
		\lvert\Gamma_k\rvert\geq e^{n_k\left(h_{\alpha_k}(f)+\Phi^*(\mu)-\eta\right)-\phi_{n_k}(^k x)-n_k\delta/6}\geq e^{n_kh^*-\phi_{n_k}(^k x)-n_k\delta/6}.
	\end{eqnarray*}
	As $G$ is compact set, we can choose finite covers $\mathcal{C}=\{B_m(x,\varepsilon)\}_{m}$ of $G$ such that $B_m(x,\varepsilon)\cap G\neq \emptyset$ for all $B_m(x,\varepsilon)\in \mathcal{C}$. For each $\mathcal{C}\in \mathcal{G}_n(G,\varepsilon)$, where $\mathcal{G}_n(G,\varepsilon)$ is the collection of all finite or countable covers of $G$ by
	sets of the form $B_m(x,\varepsilon)$ with $m \geq n$(for $n$ sufficiently large), we define the cover $\mathcal{C}'$ where we replace each ball
	$B_m(x,\varepsilon)$ by $B_{M_p}(x,\varepsilon)$ when $M_p\leq m < M_{p+1}$. Then
	
	\begin{eqnarray*}
		M(G,s, \Phi, n,\varepsilon)&=& \inf_{\mathcal{C}\in \mathcal{G}_n(G,\varepsilon)}\sum_{B_{m}(x,\varepsilon)\in \mathcal{C}}e^{-s m+\sup_{z \in B_{m}(x,\varepsilon)}\phi_{m}(z)} \\
		&\geq& \inf_{\mathcal{C}\in \mathcal{G}_n(G,\varepsilon)}\sum_{z\in B_{M_p}(x,\varepsilon)\cap G}e^{-s m+\phi_{m}(z)}.
	\end{eqnarray*}
	Consider $\mathcal{C}'$ and let $m$ be the largest such that there exist $B_{M_p}(x,\varepsilon)\in \mathcal{C}'$. We put	\[\mathcal{W}_k=\prod_{i=1}^{k}\Gamma'_i\;\;\mbox{and}\;\; \mathcal{\overline{W}}_m:=\bigcup_{k=1}^m\mathcal{W}_k.\]
	Each $z \in B_{M_p}(x,\varepsilon)\cap G$ corresponds to a point in $\mathcal{W}_p$ that is uniquely defined(Lemma \ref{lemma pfister}(i)). The word $v\in \mathcal{W}_j $ is a prefix of $w\in \mathcal{W}_k$ if the first $j$ entries of $w$ coincide with $v$. If $\mathcal{W}\subset \mathcal{\overline{W}}_m$ contains a prefix of each word of $\mathcal{W}_m$, then
	\[\sum_{k=1}^{m}\lvert\mathcal{W}\cap \mathcal{W}_k\rvert\lvert\mathcal{W}_k\rvert/\lvert\mathcal{W}_k\rvert\geq \lvert\mathcal{W}_k\rvert.\]
	Thus if $\mathcal{W}$ contains a prefix of each word of $\mathcal{W}_m$, 
	\[\sum_{k=1}^{m}\lvert\mathcal{W}\cap \mathcal{W}_k\rvert\lvert\mathcal{W}_k\rvert\geq 1.\]
	It is easy to verify that
	
	\[\lvert\mathcal{W}_p\rvert\geq e^{M_p h^*-\sum_{i=1}^{p}\left(\phi_{n_i}(^ix)+n_i'\delta\right)}.\]
	Therefore, $\sum_{B_{M_p}(x,\varepsilon)\in \mathcal{C}'}e^{-M_p h^*+\sum_{i=1}^{p}\left(\phi_{n_i}(^ix)+n_i'\delta\right)}\geq 1$
	where $^i x \in \Gamma_i'$. We prove that 
	\[M_ph^* -\sum_{i=1}^{p}(\phi_{n_i}(^{i'}x)+n_i'\delta/6)-sm+\phi_m(z)>0,\;\; \mbox{for}\;\; z\in G.\]
	By definition of asymptotically  additive we have, for $\delta>0$, that there exist $\varphi_{\delta}$ and $n_0$ such that for all $m,n_i'>n_0$ with $i=1,...,p$, we have
	
	\[-\phi_{n_i'}(^ix)+S_{n_i'}\varphi_{\delta}(^ix)>-n_i'\frac{\delta}{18} >-n_i'\frac{\delta}{6} \;\; \mbox{and}\;\; \phi_{m}(z)-S_{m}\varphi_{\delta}(z)> -m\frac{\delta}{6}. \]Then
	
	\begin{eqnarray*}
		&&\!\!\!\!\!\!\!\!\!\!\!m(h^*-s)-\sum_{i=1}^{p}(\phi_{n_i}(^{i}x)+n_i'\delta/6)+\phi_m(z)-(m-M_p)h^*\\
		&>&
		m(h^*-s-\frac{\delta}{6})-\sum_{i=1}^{p}\left(S_{n_i'}\varphi_{\delta}(^ix)+n'_i\frac{\delta}{3}\right)
		+ S_m\varphi_{\delta}(^ix)-(m-M_p)h^*\\
		&=&m(2\delta-\frac{\delta}{6})-M_{p}\frac{\delta}{3}+\sum_{i=1}^{p}\left(S_{n_i'}\varphi_{\delta}(f^{M_{i-1}}(z))-S_{n_{i}'}\varphi_{\delta}(^i x)\right)
		-(m-M_p)h^*\\
		&+& S_{m-M_p}\varphi_{\delta}(f^{M_p}(z)).
	\end{eqnarray*}	
	And as 
	$$G=\bigcap_{k\geq 1}\left(\bigcap_{j=1}^k\left(\bigcup_{x_j\in \Gamma_j'}f^{-M_{j-1}}B_{n_j'}(g;x_j,\varepsilon_j')\right)\right),$$ 
	we have that there exists $j$ such that $f^{M_{j-1}}(z) \in B_{n_j'}(g;x_j,\varepsilon_j')$. By Inclusion (\ref{eq. inclusion ball}) and $^ix \in \Gamma_i'$ we have
	
	\[\mathcal{E}_{n_i'}(f^{M_{i-1}}(z)) \in \mathcal{B}(\alpha,\xi_i'+2\varepsilon_i')\;\;\;\;\mbox{and}\;\;\; \mathcal{E}_{n_i'}(^ix)\in \mathcal{B}(\alpha,\xi_i').\] then
	
	\[\left\lvert\int \varphi_{\delta}d\mathcal{E}_{n_i'}(f^{M_{i-1}}(z))-\int \varphi_{\delta}d\mathcal{E}_{n_i'}(^ix)\right \rvert n_i'=\left\lvert S_{n_i'}\,\varphi_{\delta}(f^{M_{i-1}}(z))-S_{n_i'}\,\varphi_{\delta}(^ix)\right\rvert \leq n_i'\frac{\delta}{2}.\]
	Thus,
	
	\begin{eqnarray*}
		M_ph^*-\sum_{i=1}^{p}(\phi_{n_i}(^{i}x)+n_i'\delta/6)-sm+\phi_m(z)&\geq& m(2\delta-\frac{\delta}{6})-M_p(\frac{\delta}{2}+\frac{\delta}{3})\\
		&-&n_{p+1}'(\|g_k\|+h^*)\\
		&\geq&(2\delta-\frac{\delta}{6}-\frac{5\delta}{6})M_p -n_{p+1}'(\|g_k\|+h^*)\\
		&=&M_p\delta-n_{p+1}'(\|g_k\|+h^*).
	\end{eqnarray*}
	we can choose $p$ such that $M_p\delta-n_{p+1}'(\|g_k\|+h^*)>0$, because $\lim_{p\rightarrow \infty}\frac{n_{p+1}'}{M_p}=0$. Then,
	
	\begin{eqnarray*}
		M(G;s,\Phi,n,\varepsilon)\geq \inf_{\mathcal{C}\in \mathcal{G}_n(G,\varepsilon)}\sum_{B_{M_p}(x,\varepsilon)\in \mathcal{C}'}e^{M_ph^*-\sum_{i=1}^{p}(\phi_{n_i}(^{i}x)+n_i'\delta/6)}\geq 1
	\end{eqnarray*}
	this implies that $P(G,\Phi,\varepsilon)\geq s$. By fact that $s<h^*$ and the arbitrary choice of $\eta$ the Lemma \ref{lemma key} it's proved. 
	
	As $G\subset G_K$ then $P_f(G,\Phi)\leq P_f(G_K,\Phi)$. Therefore, we have proved Theorem \ref{theorem B2}.

\end{proof}

\subsection{Proof of Theorem B}

In this subsection, we proved Theorem B. We will use the following Lemma:

\begin{lemma}\label{lemma.inf.sup}

	Let $(X,f)$ be a dynamical systems,  $\Phi:=(\phi_n)_n \in \mathfrak{C}^0(f,X)$ and $x \in X$. Then
	\[\Phi \in \hat{\mathfrak{C}}^0(f,X),\;\; x \in I(\Phi,f)\;\;\mbox{iff} \inf_{\mu \in \mathcal{M}_xf}\Phi^{*}(\mu)< \sup_{\mu \in \mathcal{M}_xf}\Phi^{*}(\mu).\]
\end{lemma}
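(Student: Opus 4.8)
The plan is to show that the weak$^{*}$ accumulation points of the empirical measures $\mathcal{E}_n(x)$ control the behaviour of $\tfrac1n\phi_n(x)$ in a first‑order way, namely that
\[
\liminf_{n\to\infty}\frac{\phi_n(x)}{n}=\inf_{\mu\in\mathcal{M}_xf}\Phi^{*}(\mu)
\qquad\text{and}\qquad
\limsup_{n\to\infty}\frac{\phi_n(x)}{n}=\sup_{\mu\in\mathcal{M}_xf}\Phi^{*}(\mu).
\]
Granting this identity the lemma is immediate: $x\in I(\Phi,f)$ means that $\tfrac1n\phi_n(x)$ does not converge, i.e. that its $\liminf$ and $\limsup$ differ, i.e. that $\inf_{\mu\in\mathcal{M}_xf}\Phi^{*}(\mu)<\sup_{\mu\in\mathcal{M}_xf}\Phi^{*}(\mu)$; and since $x\in I(\Phi,f)$ already forces $I(\Phi,f)\neq\emptyset$, hence $\Phi\in\hat{\mathfrak{C}}^0(f,X)$, the two sides of the asserted equivalence coincide.

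To prove the displayed identity I would fix $\delta>0$ and, using that $\Phi$ is asymptotically additive, choose a continuous $\psi_\delta$ with $\limsup_n\tfrac1n\|\phi_n-S_n\psi_\delta\|<\delta$, and then argue in three steps. First, since $X$ is compact, $\mathcal{M}_xf$ is a nonempty compact subset of $\mathcal{M}_f(X)$, so each $\Phi^{*}(\mu)$ with $\mu\in\mathcal{M}_xf$ is defined; by $f$-invariance $\int\tfrac1nS_n\psi_\delta\,d\mu=\int\psi_\delta\,d\mu$ while $\bigl|\int\tfrac{\phi_n}{n}\,d\mu-\int\psi_\delta\,d\mu\bigr|\le\tfrac1n\|\phi_n-S_n\psi_\delta\|$, so letting $n\to\infty$ gives $|\Phi^{*}(\mu)-\int\psi_\delta\,d\mu|\le\delta$ for every invariant $\mu$; consequently the suprema (resp. infima) over $\mathcal{M}_xf$ of $\Phi^{*}$ and of $\mu\mapsto\int\psi_\delta\,d\mu$ differ by at most $\delta$. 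Second, $\tfrac1nS_n\psi_\delta(x)=\int\psi_\delta\,d\mathcal{E}_n(x)$, and since the accumulation points of $(\mathcal{E}_n(x))_n$ are by definition exactly the elements of $\mathcal{M}_xf$, a routine passage to convergent subsequences shows that the accumulation points of $\bigl(\tfrac1nS_n\psi_\delta(x)\bigr)_n$ are exactly $\{\int\psi_\delta\,d\mu:\mu\in\mathcal{M}_xf\}$; this set is compact, whence $\limsup_n\tfrac1nS_n\psi_\delta(x)=\sup_{\mu\in\mathcal{M}_xf}\int\psi_\delta\,d\mu$ and, dually, $\liminf_n\tfrac1nS_n\psi_\delta(x)=\inf_{\mu\in\mathcal{M}_xf}\int\psi_\delta\,d\mu$. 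Third, for all large $n$ we have $\bigl|\tfrac{\phi_n(x)}{n}-\tfrac1nS_n\psi_\delta(x)\bigr|<\delta$, hence $\bigl|\limsup_n\tfrac{\phi_n(x)}{n}-\sup_{\mu\in\mathcal{M}_xf}\int\psi_\delta\,d\mu\bigr|\le\delta$; combining with the first step this yields $\bigl|\limsup_n\tfrac{\phi_n(x)}{n}-\sup_{\mu\in\mathcal{M}_xf}\Phi^{*}(\mu)\bigr|\le2\delta$, and letting $\delta\to0$ gives the $\limsup$ identity; the $\liminf$ identity is obtained symmetrically.

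I do not expect a serious obstacle. The only point that needs care is bookkeeping of the three successive $\delta$-approximations — replacing $\phi_n$ by $S_n\psi_\delta$ along the orbit of $x$, replacing $\int\psi_\delta\,d\mu$ by $\Phi^{*}(\mu)$ against invariant measures, and replacing the accumulation points of a scalar sequence by suprema/infima over $\mathcal{M}_xf$ — but each error is linear in $\delta$ and they add harmlessly. The single structural fact invoked is the standard one that the weak$^{*}$ accumulation points of the empirical measures $\mathcal{E}_n(x)$ form a nonempty compact subset of $\mathcal{M}_f(X)$, i.e. that the set $\mathcal{M}_xf$ is a nonempty compact set of $f$-invariant measures.
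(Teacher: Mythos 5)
Your proof is correct and rests on the same core fact as the paper's: asymptotic additivity converts weak$^{*}$ convergence $\mathcal{E}_{n_i}(x)\to\mu$ into convergence $\phi_{n_i}(x)/n_i\to\Phi^{*}(\mu)$, via the uniform comparison with $S_n\psi_\delta$. The paper carries this out directly on subsequences (one for each of two measures $\mu_1,\mu_2\in\mathcal{M}_xf$), whereas you package it as the cleaner identities $\liminf_n\phi_n(x)/n=\inf_{\mu\in\mathcal{M}_xf}\Phi^{*}(\mu)$ and $\limsup_n\phi_n(x)/n=\sup_{\mu\in\mathcal{M}_xf}\Phi^{*}(\mu)$, from which the equivalence drops out immediately; this is essentially the same argument, organized a bit more transparently.
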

\begin{proof}
	Assume that $\Phi \in \hat{\mathfrak{C}}^0(f,X) $ and $x \in I(\Phi,f)$. Then, $\frac{\phi_n(x)}{n}$ not converge pointwise for a constant, in particular, there exist a sequence $n_i$ converging for $+\infty$ and $\varepsilon_0>0$ such that
	\[\left\lvert\frac{1}{n_i}\phi_{n_i}(x)-\Phi^*(\mu_1)\right\rvert> \varepsilon_0\] where $\mu_1 \in \mathcal{M}_x(f)$;
	
	Take a subsequence(if necessary) of the sequence 
	\[\nu_{i}:=\left(\frac{1}{n_i}\sum_{i=0}^{n_i-1}\delta_{f^j(x)}\right)_i\]
	such that $\nu_i \rightarrow \mu_2 $ in weak$^*$ topology. Obviously $\mu_2 \in\mathcal{M}_xf$. As $\Phi$ is asymptotically additive, for each $k$, there exists $\psi_{1/k}:=\psi_k \in C^{0}(X)$ such that 
	$\limsup_{n\rightarrow \infty}\frac{1}{n}\|\phi_n - S_n\psi_k\|<\frac{1}{k} $.
	
	Therefore, for $i$ sufficiently large
	$$\frac{S_{n_i}\psi_k(x)}{n_i}-\frac{1}{k}\leq\frac{\phi_{n_i}(x)}{n_i} \leq \frac{S_{n_i}\psi_k(x)}{n_i}+\frac{1}{k}.$$
	
	Then, we have
	\[\lim_{i\rightarrow \infty}\int \psi_kd\nu_{i}-\frac{1}{k}\leq \liminf_{i\rightarrow \infty}\frac{\phi_{n_i}(x)}{n_i} \leq \limsup_{i\rightarrow \infty}\frac{\phi_{n_i}(x)}{n_i}\leq\lim_{i\rightarrow \infty} \int \psi_kd\nu_{i}+\frac{1}{k}. \]

	And as $\nu_{i}\rightarrow \mu_2$  we have
	
	\begin{eqnarray}\label{eq 1}
	\int \psi_kd\mu_2-\frac{1}{k}\leq \liminf_{i\rightarrow \infty}\frac{\phi_{n_i}(x)}{n_i} \leq \limsup_{i\rightarrow \infty}\frac{\phi_{n_i}(x)}{n_i}\leq\int \psi_kd\mu_2+\frac{1}{k}. 
	\end{eqnarray}
	In the Equation (\ref{eq 1}), we have
	
	\[\int \psi_kd\mu_2-\frac{1}{k}\leq \limsup_{i\rightarrow \infty}\int\frac{\phi_{n_i}(x)}{n_i}d\mu_{2}=\Phi^*(\mu_2)\leq\int \psi_kd\mu_2+\frac{1}{k}.\]
	Therefore, as $k$ is arbitrary, we have
	$\limsup_{i\rightarrow \infty}\frac{\phi_{n_i}(x)}{n_i}=\Phi^{*}(\mu_2)\neq \Phi^*(\mu_1)$.

	Conversely, if for $\Phi \in\mathfrak{C}^0(f,X)$ and $x \in X$ we have
	\[\inf_{\mu \in \mathcal{M}_xf}\Phi^{*}(\mu)< \sup_{\mu \in \mathcal{M}_xf}\Phi^{*}(\mu),\]
	Then we can make two measures $\mu_1$ and $\mu_2$ in $\mathcal{M}_x(f)$ such that 
	$\Phi^{*}(\mu_1)< \Phi^{*}(\mu_2)$ and two convergence subsequences 
	
	\[\frac{1}{n_i}\sum_{i=0}^{n_i-1}\delta_{f^j(x)}\rightarrow \mu_1 \;\; \mbox{and}\;\; \frac{1}{m_i}\sum_{i=0}^{m_i-1}\delta_{f^j(x)}\rightarrow \mu_2.\] Then, as $\Phi$ is asymptotically additive, for each $k$, there exists $\varphi_{1/k}:=\varphi_k \in C^{0}(X)$ such that, for all $x\in X$ and large $n$ we have
	$\left\lvert\frac{1}{n}\phi_{n}(x)-\frac{1}{n}S_{n}\varphi_k(x)\right\rvert<\frac{1}{k}<\varepsilon$. Moreover, we can obtain that $\left\lvert\int\varphi_kd\mu_1-\Phi^{*}(\mu_1)\right\rvert<\frac{1}{k}<\varepsilon$. Then, for $i$ sufficiently large
	
	\begin{eqnarray*}
		\left\lvert\frac{1}{n_i}\phi_{n_i}-\Phi^{*}(\mu_1)\right\rvert&\leq& \left\lvert\frac{1}{n_i}\phi_{n_i}(x)-\frac{1}{n_i}S_{n_i}\varphi_k(x)\right\rvert  +\left\lvert\frac{1}{n_i}S_{n_i}\varphi_k(x)-\int \varphi_k d\mu_1\right\rvert\\
		&+&\left\lvert\int \varphi_k d\mu_1-\Phi^{*}(\mu_1)\right\rvert
		< 3\varepsilon.
	\end{eqnarray*}
	Then, $\lim_{i\rightarrow \infty}\frac{1}{n_i}\phi_{n_i}(x)=\Phi^{*}(\mu_1)$. Analogously, $\lim_{i\rightarrow \infty}\frac{1}{m_i}\phi_{m_i}(x)=\Phi^{*}(\mu_2)$. Therefore, $x \in I(\Phi,f)$ and $\Phi \in \hat{\mathfrak{C}}^0(f,X) $.

\end{proof}



\textbf{\textit{Proof of the Theorem B}}
\begin{proof}
	We fix $\varepsilon>0$ and $\Phi \in \mathfrak{\hat{C}}^0(f,X)$. By Variational Principle \ref{eq variational principle}, we choose a ergodic measure $\mu$ such that $h_{\mu}(f)+\Phi^{*}(\mu)>P_f(\Phi)-\varepsilon$. Choose $\theta\in (0,1)$ close to 1 satisfying
	\[\theta(h_{\mu}(f)+\Phi^{*}(\mu))>P_f(\Phi)-\varepsilon,\]
	and $(1-\theta)\|\Phi\|<\varepsilon$, where $\|\Phi\|$ is a positive constant such that $\|\Phi\|\geq \Phi^{*}(\mu) $ for all $\mu \in \mathcal{M}_fX$, see item 2, Proposition A1 of \cite{Feng2010}. For $\Upsilon \in D$, by Lemma \ref{lemma.inf.sup}, there is an invariant measure $\mu_{\Upsilon}$ such that $\Upsilon^{*}(\mu_{\Upsilon})\neq \Upsilon^{*}(\mu)$. Take the measure $\nu_{\Upsilon}=\theta \mu+(1-\theta)\mu_{\Upsilon}$. Then
	
	\begin{eqnarray*}
		h_{\nu_{\Upsilon}}(f)+\Phi^{*}(\nu_{\Upsilon})&=&\theta h_{\mu}(f)+(1-\theta)h_{\mu_{\Upsilon}}(f)+\theta\Phi^{*}(\mu)+(1-\theta)\Phi^{*}(\mu_{\Upsilon})\\
		&\geq& \theta(h_{\mu}(f)+\Phi^{*}(\mu))-(1-\theta)\|\Phi\|> P_f(\Phi)-2\varepsilon.
	\end{eqnarray*}
	Remember that $\Phi^{*}(\mu)=\displaystyle
	\lim_{n\rightarrow \infty}\int \frac{\phi_n(x)}{n}\mu(x)$. Then, by product topology, two  asymptotically additive sequence $\Psi:=(\psi_n)_n$ and $\Upsilon:=(\upsilon_n)_n$ are close if the sequences of continuous functions $\displaystyle\frac{\psi_n}{n}$ and $\displaystyle\frac{\upsilon_n}{n}$ are close for each $n$. Therefore, by continuity of sequences, for each $\Upsilon \in D$ there exist an neighborhood $\mathcal{V}_{\Upsilon}\subseteq \mathfrak{C}^0(f,X) $ of $\Upsilon$ such that for each $\Psi \in \mathcal{V}_{\Upsilon}$ we have $\Psi^{*}(\mu)\neq \Psi^{*}(\nu_{\Upsilon})$. So $\{\mathcal{V}_{\Upsilon}: \Upsilon \in D\}$ forms an cover of $D$. We can take a countable subcover $\{\mathcal{V}_{\Upsilon_i}\}_{i\in \mathbb{N}}$ of $D$ because $C^0(X)$ has countable topological basis and then the product topological  $\mathfrak{C}^0(f,X)$ also has(second axiom of countability). Then, for each $\Psi \in D$, there is $i\geq 1$ such that $\Psi \in \mathcal{V}_{\Upsilon_i}$ and satisfying
	\[\Psi^{*}(\mu)\neq \Psi^{*}(\nu_{\Upsilon_i}).\]
	
	Put $\nu_i:= \nu_{\Upsilon_i}$ and define the sequence of measures $\eta_i:=\theta_i\mu+(1-\theta_i)\nu_i$, where $(\theta_i)_i\subset (0,1)$ is a increasing sequence converging the 1 such that $\theta_1\geq \theta$. Then, for each $\Psi\in D$, there is $i\geq 1$ such that $ \Psi^{*}(\mu)\neq \Psi^{*}(\eta_{i})$. Note that
	
	\begin{eqnarray*}
		h_{\eta_i}(f)+\Phi^{*}(\eta_{i})&\geq& \theta_i(h_{\mu}(f)+\Phi^{*}(\mu))+(1-\theta_i)h_{\nu_i}(f)-(1-\theta_i)\|\Phi\|\\
		&\geq& \min\left\{\theta(h_{\mu}(f)+\Phi^{*}(\mu)),h_{\mu}(f)+\Phi^{*}(\mu)\right\}-(1-\theta)\|\Phi\|\\
		&>&P_f(\Phi)-2\varepsilon.
	\end{eqnarray*}
	
	Define the set $K:=\{\mu\}\cup \bigcup_{i=1}^{\infty}\{t\eta_i+(1-t)\eta_{i+1}, t \in [0,1]\}$. By fact that $\eta_i$ converge for $\mu \in K$ this implies that $K$ is compact and connected set and that every $\nu \in K$ satisfies $h_{\nu}(f)+\Phi^{*}(\nu)>P_f(\Phi)-2\varepsilon$.
	Since $f$ is sequential saturated, then
	\[P_f(G_K,\Phi)=\inf\{h_{\nu}(f)+\Phi^{*}(\nu): \nu \in K\}\geq P_f(\Phi)-2\varepsilon.\]
	For finishing the proof of Theorem, we show that $G_K\subseteq \bigcap_{\Psi \in D}I(\Psi,f)$. Let $x\in G_K$ and $\Psi \in D$. Then $\Psi^{*}(\mu)\neq \Psi^{*}(\eta_i)$ for some $i\geq 1$. We have that $\mathcal{M}_xf=K\supseteq \{\mu, \eta_i\}$. Then, there two sequences $(n_j)_j$ and $(m_j)_j$ such that $\frac{1}{n_j}\sum_{l=0}^{n_j-1}\delta_{f^l(x)}=\mu$ and $\frac{1}{m_j}\sum_{l=0}^{m_j-1}\delta_{f^l(x)}=\eta_i$. And as $\Psi$ is asymptotically additive, we have 
	
	\begin{eqnarray*}
		\left\lvert\frac{1}{n_i}\psi_{n_i}(x)-\Psi^{*}(\mu)\right\rvert&\leq& \left\lvert\frac{1}{n_i}\psi_{n_i}(x)-\frac{1}{n_i}S_{n_i}\varphi_k(x)\right\rvert + \left\lvert\frac{1}{n_i}S_{n_i}\varphi_k(x)-\int \varphi_k d\mu_1\right\rvert\\
		&+&\left\lvert\int \varphi_k d\mu_1-\Psi^{*}(\mu)\right\rvert
		< 3\varepsilon.
	\end{eqnarray*}
	Therefore $\lim_{i\rightarrow \infty}\frac{1}{n_i}\psi_{n_i}(x)=\Psi^{*}(\mu_1)$. Analogously, $\lim_{i\rightarrow \infty}\frac{1}{m_i}\psi_{n_i}(x)=\Psi^{*}(\eta_i)$. Then, $x \in I(\Psi,f)$. As $x \in G_K$ and $\Psi \in D$ are arbitrary, the Theorem it's proved.
	
\end{proof}

\section{Applications and examples}\label{aplication}
In this section, we given some examples and applications of our results.
\begin{example}
	Our result apply to the  asymptotically additive sequences  of the examples \ref{Ban-Cao-Hu2010} and \ref{Barreira1996}. 
	
	
	
\end{example}

\subsection{Cocycles  under shift of the finite type}

We will consider cocycles under shift of the finite type in \cite{FengKaenmaki2011}. For more details, see \cite{Barreira-survey}.

\begin{example}
	Let $\sigma:\Sigma \rightarrow \Sigma$ be the shift map on the space $\Sigma=\{1,...,m\}^{\mathbb{N}}$, $m\geq 2$ doted with the metric $d(x,y)=2^{-\min\{j\geq 1;x_j\neq y_j\}}$ where $x=(x_1x_2...)$ and $y=(y_1y_2...)$.
	
	Consider matrices $M_1,...,M_m \in \mathcal{M}_{d\times d}(\mathbb{C})$ such that for each $n\geq 1$ there exists $i_1,...,i_n \in \{1,...,m\}^{n}$ such that $M_{i_1}\cdot ... \cdot M_{i_n}\neq 0$. Then, the topological pressure function is well defined with  
	\[P(t)=\lim_{n\rightarrow \infty}\frac{1}{n}\log \sum_{w\in \{1,...,m\}^{n}}\|M_{i_1}\cdot ... \cdot M_{i_n}\|^t,\]
	where $w=(i_1,...,i_n)$.
	
	We define a class of functions that are obtained via a product of matrices. For each $t\geq 0$, $n \in \mathbb{N}$ and $w=(i_1,...,i_n) \in \{1,...,m\}^{n}$, we consider the locally constant functions $\psi^t_{w}:\Sigma \rightarrow \mathbb{R}^+$
	\[\psi^t_{w}(x)=\|M_{i_1}\cdot ... \cdot M_{i_n}\|^t.\] 
	We define a sequence of functions $\Phi^t=(\phi^t_n:\Sigma \rightarrow \mathbb{R})_n$ of the following form
	\[\phi^t_n(x)=\sup_{w'\in C(x)}\log \psi_{w'}^t(x)=\sup_{w'\in C(x)}\log \|M_{i_1}\cdot ... \cdot M_{i_n}\|^t,\]
	where $w'=(i_1,...,i_n)$ and $C(x)$	is the set of blocks of $n$ elements  that are equal to the first elements of $x$. 

	In \cite{Feng2009}, it show that there exists $C>0$ and $k\in \mathbb{N}$  such that $w, w' \in \bigcup_{n\in \mathbb{N}}\{1,...,m\}^n$ there exists $\overline{w}\in \sum_{j=1}^{k}\{1,...,m\}^k$ for which
	
	\begin{eqnarray}\label{condition distortion}
	\|M_w M_{\overline{w}} M_{w'}\|\geq C \|M_w\|\cdot \|M_{w'}\|.
	\end{eqnarray}
	The property in (\ref{condition distortion}) ensures that the sequence $\Phi^t$ is almost additive(see \cite{Feng2009},\cite{Barreira-survey}), then is asymptotically additive. Then, by Theorem B, we have
	\[P_{\sigma}\left(\bigcap_{s\geq 0}I(\Phi^s),\Phi^t\right)=P_{\sigma}(\Phi^t)\]
	for each $t\geq0$.
	
\end{example}

\subsection{Nonconformal repellers}

We describe a class of nonconformal repellers considered by Barreira and Gelfert in \cite{Barreira-Gelfert}. For more details, see also \cite{Barreira-survey}. 

Let $f:\mathbb{R}^2\rightarrow \mathbb{R}^2$ be a $C^1$ map and let $\Lambda\subset \mathbb{R}^2$ a compact $f$-invariant set. We say that $f$ is expanding map on $\Lambda$ and that $\Lambda$ is a repeller of $f$ if there exist constant $C>0$ and $\beta>1$ such that 
\[\|Df^n(x)v\geq C \beta^n\|v\|\]
for each $x\in \Lambda, n \in \mathbb{N}$ and $v\in T_xM$.
We assume that there a open set $\mathcal{U}\supset \Lambda$ such that $\Lambda=\bigcap_{n\in \mathbb{N}}f^n(\mathcal{U})$ and that $f$ is  topologically mixing on $\Lambda$.

Given a number $\gamma<\frac{1}{2}$ and a subspace 1-dimensional $E(x)\subset \mathbb{R}^2$, we considered the cone

\[C_{\gamma}(x):=\{(u,v)\in E(x)\oplus E(x)^{\perp};\|v\|\leq \gamma \|u\|\}.\]

We say that differential map $f:\mathbb{R}^2\rightarrow \mathbb{R}^2$ satisfies the ``cone condition" on a compact set $\Lambda \subset \mathbb{R}^2$ if there exist $\gamma<1$
and for each $x \in \Lambda$ a 1-dimensional subspace $E(x)\subset \mathbb{R}^2$ varying continuously with $x$ such that

\[Df(x)C_{\gamma}(x)\subset\{0\}\cup \mbox{int}C_{\gamma}(f(x)).\]

Let $\Phi_i$ be the almost additive sequence obtained as follows: Let the singular values of a $2\times 2$ matrix $A$, and

\[\sigma_1(A)=\|A\|\;\;\; \mbox{and}\;\;\; \sigma_2(A)=\|A^{-1}\|^{-1}.\]

Given a $C^1$ map $f:\mathbb{R}^2\rightarrow \mathbb{R}^2$. The sequences of functions $\Phi_i=(\phi_{i,n})_n$, $i=1,2$, is given by
\[\phi_{i,n}(x)=\log \sigma_i(Df^n(x)).\]
\begin{proposition}[\cite{Barreira-Gelfert}, Proposition 4]
	Let $\Lambda$ be a repeller of a $C^1$ map $f:\mathbb{R}^2\rightarrow \mathbb{R}^2$. If $f$ satisfies a cone condition on $\Lambda$, then $\Phi_i$ is almost additive sequence for $i=1,2$. 
\end{proposition}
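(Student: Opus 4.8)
The plan is to prove, for each $i\in\{1,2\}$, the two inequalities in the definition of an almost additive sequence, with a constant $C_{\Phi_i}$ depending only on the aperture $\gamma$ of the cone field. Write $A_n(x):=Df^n(x)$, so that $A_{n+m}(x)=A_m(f^n(x))\,A_n(x)$. One side of each inequality is for free: submultiplicativity of the operator norm gives $\phi_{1,n+m}(x)\le\phi_{1,n}(x)+\phi_{1,m}(f^n(x))$, and since $\sigma_2(AB)=\|B^{-1}A^{-1}\|^{-1}\ge(\|A^{-1}\|\,\|B^{-1}\|)^{-1}$, the smallest singular value is supermultiplicative, giving $\phi_{2,n+m}(x)\ge\phi_{2,n}(x)+\phi_{2,m}(f^n(x))$ --- both with constant $0$. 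The content of the proposition is the reverse estimates.

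The key step is a uniform comparison between $\sigma_1(A_n(x))$ and the expansion of $A_n(x)$ along the invariant cone field. Choose for each $x\in\Lambda$ a unit vector $v(x)$ spanning $E(x)$; by continuity of $E(\cdot)$ and compactness of $\Lambda$ the aperture $\gamma<1$ of $C_\gamma(\cdot)$ is uniform, and iterating the cone condition gives $A_n(x)C_\gamma(x)\subset\{0\}\cup\operatorname{int} C_\gamma(f^n(x))$ for every $n$. The claim is that there is $c=c(\gamma)>0$ with
\[
c\,\sigma_1(A_n(x))\le\|A_n(x)\,w\|\le\sigma_1(A_n(x))
\]
for all $n$, all $x\in\Lambda$ and every unit vector $w\in C_\gamma(x)$. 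Only the left inequality needs work. Using the singular value decomposition of $A_n(x)$ with right singular vectors $e_1,e_2$, one has $\|A_n(x)w\|^2=\sigma_1^2\langle w,e_1\rangle^2+\sigma_2^2\langle w,e_2\rangle^2$, so it suffices to bound $|\langle w,e_1\rangle|$ below. If the ratio $\sigma_1(A_n(x))/\sigma_2(A_n(x))$ stays below a threshold $K(\gamma)$ this is automatic, since then $\|A_n(x)w\|\ge\sigma_2\ge\sigma_1/K$. Otherwise I argue by contradiction that $w$ cannot lie near $\pm e_2$: under the angle map $\tan\psi=(\sigma_2/\sigma_1)\tan\phi$ (angles measured from $e_1$, resp.\ from the image of $e_1$), a cone of aperture $\gamma$ placed around a direction close to $e_2$ is sent to a sector whose angular width tends to $\pi$ as $\sigma_1/\sigma_2\to\infty$; for $\sigma_1/\sigma_2$ large this sector cannot be contained in $C_\gamma(f^n(x))$, whose sectors have width $2\arctan\gamma<\pi/2$ --- contradicting the cone condition. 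Hence $|\langle w,e_1\rangle|\ge c(\gamma)$, proving the claim.

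Granting the claim, the reverse inequality for $\Phi_1$ follows by setting $w:=A_n(x)v(x)/\|A_n(x)v(x)\|\in C_\gamma(f^n(x))$ and writing
\[
\sigma_1(A_{n+m}(x))\ge\|A_{n+m}(x)v(x)\|=\|A_n(x)v(x)\|\cdot\|A_m(f^n(x))\,w\|\ge c\,\sigma_1(A_n(x))\cdot c\,\sigma_1(A_m(f^n(x))),
\]
applying the claim at $x$ and at $f^n(x)$; this yields $\phi_{1,n+m}(x)\ge\phi_{1,n}(x)+\phi_{1,m}(f^n(x))+2\log c$. For $\Phi_2$ I would pass through the determinant: $\sigma_1(A)\sigma_2(A)=|\det A|$ and $\det A_{n+m}(x)=\det A_m(f^n(x))\cdot\det A_n(x)$, so dividing the last display by these determinants gives $\sigma_2(A_{n+m}(x))\le c^{-2}\sigma_2(A_n(x))\sigma_2(A_m(f^n(x)))$, i.e.\ $\phi_{2,n+m}(x)\le\phi_{2,n}(x)+\phi_{2,m}(f^n(x))-2\log c$. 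Taking $C_{\Phi_i}=2\log(1/c)$ finishes both cases.

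The hard part is the uniform comparison claim, and specifically that the constant $c$ is uniform in $n$ and $x$: this is exactly where the cone condition is genuinely used, through the planar geometric fact that a narrow cone placed near the most contracted singular direction of $A_n(x)$ would be stretched to nearly a half-plane of directions, which cannot survive inside the invariant cone field. Continuity of $E(\cdot)$ and compactness of $\Lambda$ enter only to make $\gamma$, and hence $c$, a single constant; the expanding hypothesis is needed merely to guarantee $\det A_n(x)\ne 0$, so that each $\phi_{i,n}$ is a well-defined continuous real-valued function.
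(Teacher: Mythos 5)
The paper does not prove this proposition at all: it is stated with the attribution ``\cite{Barreira-Gelfert}, Proposition~4'' and used as a black box, so there is no in-paper argument against which to compare yours. Evaluated on its own terms, your decomposition of the problem is the right one --- the upper bound for $\phi_{1}$ and lower bound for $\phi_{2}$ really are free from sub/super-multiplicativity of $\sigma_{1},\sigma_{2}$, the chain rule $A_{n+m}(x)=A_{m}(f^{n}x)A_{n}(x)$ together with a uniform comparison $\|A_n(x)w\|\asymp\sigma_1(A_n(x))$ on the cone gives the remaining inequality for $\Phi_1$, and dividing by the multiplicative identity $\sigma_1\sigma_2=|\det|$ converts it into the missing inequality for $\Phi_2$. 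This scaffolding is sound and is, in spirit, what Barreira--Gelfert do.

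The gap is in your proof of the key comparison claim. You argue by contradiction that a unit $w\in C_\gamma(x)$ cannot be close to $\pm e_2$, on the grounds that ``a cone of aperture $\gamma$ placed around a direction close to $e_2$'' would be spread out to a sector of width nearly $\pi$. But $C_\gamma(x)$ is placed around $E(x)$, not around $e_2$; the hypothesis $w\in C_\gamma(x)$ with $w$ near $e_2$ only tells you that the cone \emph{touches} a direction near $e_2$, possibly at its boundary, not that it \emph{straddles} $e_2$. If $e_2$ sits at (or just outside) the angular edge of $C_\gamma(x)$, the whole angular interval of the cone lies on one side of $\pi/2$ in the $(e_1,e_2)$-coordinates, and the map $\phi\mapsto\arctan\bigl((\sigma_2/\sigma_1)\tan\phi\bigr)$ can then compress the entire cone to a narrow sector near $u_1$ even when $\sigma_1/\sigma_2$ is enormous. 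Concretely, for $A=\operatorname{diag}(1/r,1)$ and the angular cone $[\pi/2-2\arctan\gamma-\epsilon,\ \pi/2-\epsilon]$ with $\epsilon$ of order $r$, the image has width $\approx\arctan(r/\epsilon)$, which can be kept below $2\arctan\gamma$ while $\|Aw\|/\sigma_1(A)$ for the boundary vector $w$ is of order $r\to 0$; so at the level of a single matrix the estimate $\|Aw\|\ge c\,\sigma_1(A)$ for \emph{all} cone vectors $w$ genuinely fails. To make the geometric idea work dynamically one has to exploit the iterated cone condition rather than a single $n$: the strict inclusion $Df(x)C_\gamma(x)\subset\operatorname{int}C_\gamma(f(x))$ on the compact set $\Lambda$ produces a continuous $Df$-invariant line $E^u(x)\subset C_\gamma(x)$ (a fixed-point/Birkhoff-contraction argument on the projective line) and a domination estimate; one then proves $\|A_n(x)\mathbf{u}(x)\|\asymp\sigma_1(A_n(x))$ for a unit vector $\mathbf{u}(x)\in E^u(x)$ and runs the chain rule through the invariant direction, rather than through an arbitrary $w\in C_\gamma(x)$. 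You should either construct $E^u$ and argue along it, or else prove a quantitative cone-narrowing statement for the \emph{products} $A_n$ (not just one matrix) and track the angle between $e_2^{(n)}(x)$ and $C_\gamma(x)$ uniformly in $n$; as written, the ``wide sector'' contradiction does not close.
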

Let $\delta>0$ be such that for each $x\in \Lambda$ the map $f$ is invertible on $B(x,\delta)$. For each $x\in \Lambda$ and $n\in \mathbb{N}$ we define
\[B(x,n,\delta)=\bigcap_{l=0}^{n-1}f^{-l}(B(f^l(x),\delta).\]
We say that $f$ have \textit{ bounded distortion}  on $\Lambda$ if there exist $\delta>0$ such that

\[\sup\left\{\|Df^n(y)(Df^n(z))^{-1}\|;x \in \Lambda\;\; \mbox{and}\;\; y,z \in B(x,n,\delta)\right\}< \infty.\]

We will see the relationship between the sequences $\Phi_i$ and the Lyapunov exponents. Given a differentiable transformation $f:M\rightarrow M$. By Oseledet's multiplicative ergodic theorem, for each finite $f$-invariant measure $\mu$ on $M$ there exist a $\mu$-full set $X\subset M$  such that $x\in X$, there exists numbers $\lambda_1(x),...,\lambda_{s(x)}$ and subspaces $M=V_1(x)\supset V_2(x)\supset...\supset V_{s(x)}(x)\supset V_{s(x)+1}(x)=\{0\}$ such that
\[\lim_{n\rightarrow \infty}\frac{1}{n}\log\|Df^n(x)v\|=\lambda_i(x),\]
for each $v\in V_{i}(x)\backslash V_{i-1}(x)$ and $i=1,...,s(x)$. The numbers $\lambda_1(x),...,\lambda_{s(x)}$ are the Lyapunov exponents of $\mu$.
In particular, for $f$ above($M=\mathbb{R}^2$) and for $x\in X$ we have

\[\lim_{n\rightarrow \infty}\frac{\phi_{i,n}(x)}{n}=\lim_{n\rightarrow \infty}\frac{1}{n}\log\sigma_i(Df^n(x))=\lambda_i(x)\;\;\;\mbox{for each}\;\;\; i=1,2.\]
Barreira and Gelfert, \cite{Barreira-Gelfert}, proved that if a $C^1$ map $f:\mathbb{R}\rightarrow \mathbb{R}$ has bounded distortion and satisfy a cone condition on $\Lambda$ then $(f,\Phi_i)$ has a unique equilibrium states $\mu_i$. Then, by Theorem B, ensures that
\begin{eqnarray*}
	P_f\left(I(\Phi_1)\cap I(\Phi_2),\Phi_i\right)
	&=&P_f(\Phi_i)=h_{\mu_i}(f)+\int_{\Lambda}\lambda_i(x)d\mu_i(x)\\
	&=&h_{\mu_i}(f)+\lim_{n\rightarrow \infty}\frac{1}{n}\int_{\Lambda}\log \sigma_i(Df^n(x))d\mu_i(x)
\end{eqnarray*}
for each $i=1,2$.

\hspace{1.6cm}

\textbf{Acknowledgments}: This work was partially supported by a INCT-Mat/Capes-Brazil postdoctoral
fellowship at University of Bahia. The author is grateful to P.
Varandas  and V. Ramos for reading a preliminary version of this paper and making important suggestions that helped improve the presentation of the text.  The author  is also grateful to anonymous referees for giving us a simpler proof for the Theorem A and suggestions that will help improve the text.
\vspace{2cm}

\begin{thebibliography}{[MDF]}
	
	\raggedright
	
	
	\bibitem{Ban-Cao-Hu2010} J. Ban, Y. Cao and H. Hu,The dimensions of a non-conformal repeller and an average conformal repeller, Transactions of the American Mathematical Society. 362(2) (2010) 727--751.
	
	
	\bibitem{Bar06} L. Barreira, Nonadditive thermodynamic formalism: equilibrium and Gibbs measure, Discrete Contin. Dyn. Syst. 16 (2006) 279--305.
	
	\bibitem{Barreira1996} L. Barreira, A non-additive thermodynamic formalism and applications to dimension theory of hyperbolic dynamical systems, Ergodic Theory and Dynamical Systems. 16(5) (1996) 871--927.
	
	\bibitem{Barreira-survey} L. Barreira, Almost additive thermodynamic formalism: some recent developments, Reviews in Mathematical Physics. 22(10) (2010) 1147--1179.
	
	\bibitem{Barreira-Gelfert} L. Barreira and K. Gelfert, Multifractal Analysis for Lyapunov Exponents on Nonconformal Repellers, Communications in Mathematical Physics. 267(2) (2006) 393.
	
	
	\bibitem{BarLiVal} L. Barreira,  J. Li  and C. Valls, Irregular sets of two-sided Birkhoff averages
	and hyperbolic sets, Arkiv för Mathematik. 54 (2016) 13--30.	
	
	\bibitem{Barreira2000} L. Barreira and J. Schmeling, Sets of ``Non-typical'' points have full topological entropy and full Hausdorff dimension, Israel Journal of Mathematics. 116(1) (2000) 29--70.
	
	\bibitem{bomfim2015} T. Bomfim and P. Varandas, Multifractal analysis of the irregular set for almost-additive sequences via large deviations, Nonlinearity. 28 (2015) 3563--3585.
	
	
	\bibitem{bomfim2017} T. Bomfim and P. Varandas, On the topological entropy of saturated sets, Ergodic Theory Dynamic Systems. 37 (2017) 79–102.
	
	
	\bibitem{Bow73} 
	R. Bowen, Topological entropy for noncompact sets, Trans. Amer. Math. Soc.  184 (1973),
	125--136.
	
	\bibitem{Cao08} Y. Cao, D. Feng and W. Huang, The thermodynamic formalism for sub-additive potentials, Discrete Contin. Dyn. Syst. 20(3) (2008) 639--657.
	
	\bibitem{ercai} E. Chen, T. Küpper, and S. Lin, Topological entropy for divergence points, Ergodic
	Theory Dynam. Systems 25 (2005), no. 4, 1173–1208.
	
	\bibitem{Feng2009} D. J. Feng, Lyapunov exponents for products of matrices and multifractal analysis. Part II: General matrices, Israel Journal of Mathematics. 170(1) (2009) 355.
	
	\bibitem{FengLau2002} D. J. Feng, K.S Lau, The Pressure Function for Products of Non-negative Matrices, Mathematical Research Letters. 30 (2002).
	
	
	
	\bibitem{Feng2010} D. Feng and W. Huang, Lyapunov spectrum of asymptotically sub-aditive potentials, Comm. math. Phys. 297 (2010) 1--43.
	
	\bibitem{FengKaenmaki2011} D.J. Feng and A. Käenmäki, Equilibrium states of the pressure function for products of matrices, Discrete and Continuous Dynamical Systems. 30(3) (2011) 699--708.
	
	\bibitem{li2013}J. Li and M. Wu, The sets of divergence points of self-similar measures are residual, J. Math.	Anal. Appl., 404 (2013), 429–437.
	
	\bibitem{li2014} J. Li and M. Wu, Generic property of irregular sets in systems satisfying the specification property, Discrete and Continuous Dynamical Systems. 34(2) (2014) 635--645.
	
	\bibitem{Mum2006} A. Mummert, The thermodynamic formalism for almost-additive sequences, Discrete Contin. Dyn. Syst. 16 (2006) 435--454.
	
	
	\bibitem{Pei10} Y. Pei
	and E. Chen,On the variational principle for the topological pressure for certain non-compact sets, Science China Mathematics. 53(4) (2010) 1117--1128.
	
	\bibitem{Pesin1984} Y. Pesin and B. Pitskel,Topological pressure and the variational principle for noncompact
	sets, Functional Anal. Appl. 18 (1984) 307--318.
	
	
	
	\bibitem{Pfister2007} C.E. Pfister and W.G. Sullivan, On the topological entropy of saturated sets, Ergodic Theory Dynamic Systems. 27 (2007) 929--956.
	
	\bibitem{Pfister2005} C.E. Pfister and W.G. Sullivan, Large deviations estimates for dynamical systems without the specification property. Application to the $\beta$-shifts, Nonlinearity. 18(1) (2003) 237.
	
	\bibitem{Pfister2003} C.E. Pfister and W.G. Sullivan, Billingsley dimension on shift spaces, Nonlinearity. 16 (2003) 661--682.
	
	\bibitem{Tian2017} X. Tian,  Topological pressure for the completely irregular set of Birkhoff averages, Discrete Contin. Dyn. Syst. 37(5) (2017) 2745--2763.
	
	
	\bibitem{thompson2012} D. Thompson, Irregular sets, the beta-transformation and the almost specification property, Trans. Amer. Math. Soc. 364 (2012), 5395--5414.
	
	\bibitem{Thomp2010} D. Thompson, The irregular set for maps with the specification property has full topological pressure, Dyn. Syst. 25(1) (2010) 25--51.
	
	\bibitem{Thomp2009} D. Thompson, A variational principle for topological pressure for certain non-compact sets, Journal of the London Mathematical Society 80(3) (2009) 585--602.
	
	\bibitem{Zhao2011} Y. Zhao, L. Zhang and Y. Cao , The asymptotically additive topological pressure on the irregular set for asymptotically additive potentials, Nonlinear Anal. 74 (2011), no. 15, 5015--5022.
	
	\bibitem{Zhao2008} Y. Zhao, A note on the measure-theoretic pressure in sub-additive case, Chinese Ann. Math. Ser.A 29(3)(2008) 325--332.
	
	
	
\end{thebibliography}

\end{document}